\theoremstyle{plain}
\newtheorem{theorem}{Theorem}
\newtheorem{lemma}{Lemma}
\newtheorem{proposition}{Proposition}
\newtheorem*{proposition_uten_num}{Proposition}
\theoremstyle{definition}
\newtheorem{definition}{Definition}
\numberwithin{equation}{section}
\newcommand{\bfx}{\mathbf{x}}
\newcommand{\bfc}{\mathbf{c}}
\newcommand{\bfe}{\mathbf{e}}
\newcommand{\Hu}{\mathcal{H}u}
\newcommand{\D}{\mathcal{D}}
\newcommand{\hu}{\widehat{\nabla}u}
\newcommand{\dd}{\mathrm{\,d}}
\newcommand{\od}{\Delta^N}
\DeclareMathOperator{\tr}{tr}
\DeclareMathOperator{\di}{div}
\begin{document}
\author{Karl K. Brustad\\ {\small Norwegian University of Science and Technology}}
\title{Superposition of p-superharmonic functions}
\maketitle

\begin{abstract}
The \emph{Dominative} \(p\)-Laplace Operator is introduced. This operator is a relative to the \(p\)-Laplacian, but with the distinguishing property of being sublinear. It explains the
superposition principle in the \(p\)-Laplace Equation.
\end{abstract}
\section{Introduction}

The \(p\)-Laplace Equation
\begin{equation}
\Delta_p u := \di(|\nabla u|^{p-2}\nabla u) = 0
\label{ple}
\end{equation}
is the Euler-Lagrange equation for the variational integral \(\int_\Omega|\nabla u|^p\dd x\).
When \(p=2\) we have Dirichlet's integral and Laplace's Equation \(\Delta u = 0\). For \(p=\infty\) we define
\begin{equation}
\Delta_\infty u := \sum_{i,j=1}^n\frac{\partial^2 u}{\partial x_i\partial x_j}\frac{\partial u}{\partial x_i}\frac{\partial u}{\partial x_j} = 0.
\label{infop}
\end{equation}
The object of our work is a superposition principle, originally discovered by Crandall and Zhang in \cite{MR1928087}.

Although the \(p\)-Laplace Equation is nonlinear when \(p\neq 2\), a principle of superposition for the \emph{fundamental solutions}
\begin{equation}
w_{n,p}(x) :=
\begin{cases}
-\frac{p-1}{p-n}|x|^\frac{p-n}{p-1}, & p\neq n,\\
-\ln|x|, & p = n,\\
-|x|, & p=\infty\;\text{ or }\; n=1,
\end{cases}
\label{fundsol}
\end{equation}
is valid. That is, if \(p\geq 2\) and \(c_i\geq 0\), then the superposition
\begin{equation}
V(x) := \sum_{i=1}^N c_i w_{n,p}(x-y_i)
\label{crand}
\end{equation}
satisfies \(\Delta_p V\leq 0\) away from the poles \(y_1,\dots,y_N\). Moreover, the sum is \emph{\(p\)-superharmonic} in the whole of \(\mathbb{R}^n\) according to Definition \ref{visdef} in Section \ref{sect_vis}.
In \cite{Brustad201723} an explicit formula for \(\Delta_p V(x)\) was derived.
There it was shown that an arbitrary concave function also may be added to the sum \eqref{crand}.
The result can be extended to infinite sums, and via Riemann sums one obtains that the potentials
\[V(x) = \int_{\mathbb{R}^n}\rho(y)w_{n,p}(x-y)\dd y,\qquad \rho\geq 0,\]
are \(p\)-superharmonic functions, provided that \(V(x) \not\equiv \infty\).
See \cite{MR2350398} and \cite{MR2914612}.

It has been a little mystery \emph{why} the sum \eqref{crand} is \(p\)-superharmonic. It has not been clear what the underlying reasons are, or how far the superposition could be extended.
It turns out that a class of functions called \emph{dominative \(p\)-superharmonic functions} plays a central rôle in these questions. We introduce them through the sublinear operator
\begin{equation}
\D_p u := \lambda_1 + \cdots + \lambda_{n-1} + (p-1)\lambda_n,\qquad p\geq 2,
\label{predef}
\end{equation}
where \(\lambda_1\leq\cdots\leq\lambda_n\) are the eigenvalues of the Hessian matrix
\[\Hu := \left(\frac{\partial^2 u}{\partial x_i\partial x_j}\right)_{i,j = 1}^n.\]

The fundamental solutions are members of this class and for \(C^2\) functions we have
\begin{proposition_uten_num}
Let \(u\in C^2(\Omega)\). Then
\[\D_p u\leq 0\qquad\Rightarrow\qquad \Delta_p u\leq 0\qquad\text{in \(\Omega\).}\]
\end{proposition_uten_num}
In general, the inequality \(\D_pu\leq 0\) must be interpreted in \emph{the viscosity sense}, see Section \ref{sect_vis}. As we shall see, the superposition principle is governed by the equation \(\D_pu\leq 0\).

Needless to say, the eigenvalues of \(\Hu\) have been much studied.
In \cite{MR2817413} the equation \(\lambda_1 = 0\) is found.
The related equation \(\lambda_n = 0\) is produced by the dominative operator in the limit \(p\to\infty\):
\[\tfrac{1}{p}\D_p u \to \lambda_n =: \D_\infty u.\]
The supersolutions \(\D_\infty u\leq 0\) are, in fact, the \emph{concave} functions. We also mention the paper \cite{MR2128299} where symmetric functions of the eigenvalues are investigated.
So far as we know, the Dominative \(p\)-Laplace Equation is new for \(p\neq\infty\).

Our main results are Theorem \ref{mainthm1} and Theorem \ref{mainthm2} below. Theorem \ref{mainthm1} gives sufficient and necessary conditions for a sum to be \(p\)-superharmonic.
In short: a generic sum is \(p\)-superharmonic if and only if its terms are dominative \(p\)-superharmonic functions. Furthermore, it will be demonstrated that
one cannot expect a sum of \(p\)-\emph{harmonic} functions (like \eqref{crand}) to be \(p\)-superharmonic unless the functions involved have a high degree of symmetry (Definition \ref{cyldef}).

In Theorem \ref{mainthm2} we extend the superposition principle for the fundamental solutions to \emph{arbitrary radial \(p\)-superharmonic functions.} Its proof is obtained by showing that important properties of the dominative \(p\)-Laplace operator in the smooth case, also hold in the viscosity sense.

Throughout the paper, we restrict ourselves to the case \(2\leq p\leq\infty\) and \(n\geq 2\). An open subset of \(\mathbb{R}^n\) is denoted by \(\Omega\).

\begin{theorem}\label{mainthm1}
Let \(2\leq p\leq\infty\). The following conditions hold pointwise in \(\mathbb{R}^n\).
\begin{enumerate}[(i)]
	\item Let \(u_1,\dots,u_N\) be dominative \(p\)-superharmonic \(C^2\) functions. Then
	\[\Delta_p\left[\sum_{i=1}^N u_i\right] \leq 0.\]
	\item Let \(u\) be \(C^2\). Then the following claims are equivalent.
	\begin{enumerate}
		\item For every linear function \(l(x) = a^Tx\),\[\Delta_p[u+l] \leq 0.\]
		\item For all constants \(c\geq 0\) and every translation \(T(x) = x-x_0\),\[\Delta_p[u+c\,w_{n,p}\circ T] \leq 0.\]
		\item For every isometry \(T\colon\mathbb{R}^n\to\mathbb{R}^n\),\[\Delta_p\left[u+u\circ T\right] \leq 0.\]
		\item \(u\) is a dominative \(p\)-superharmonic function.
	\end{enumerate}
If \(u\), in addition, is \(p\)-harmonic and \(2<p<\infty\), then
	\begin{enumerate}\setcounter{enumii}{4}
		\item \(u\) is locally a cylindrical fundamental solution (see Def. \ref{cyldef}).
	\end{enumerate}
\end{enumerate}
\end{theorem}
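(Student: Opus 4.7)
The proof rests on two facts established earlier: the operator $\D_p$ is sublinear and positively homogeneous (since $\D_p u = \tr(\Hu) + (p-2)\lambda_n$, with both summands sublinear on symmetric matrices), and the unnumbered proposition $\D_p u \leq 0 \Rightarrow \Delta_p u \leq 0$ for $C^2$ functions. Part (i) drops out immediately: by sublinearity, $\D_p\bigl[\sum_i u_i\bigr] \leq \sum_i \D_p u_i \leq 0$, so $\Delta_p\bigl[\sum_i u_i\bigr]\leq 0$.

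For (ii), the implications (d)$\Rightarrow$(a),(b),(c) reduce to checking that the admissible perturbation is itself dominative $p$-superharmonic, after which sublinearity and the proposition close the argument. A linear function has $\Hu=0$; the fundamental solution satisfies $\D_p w_{n,p}\leq 0$ by an eigenvalue-of-the-Hessian computation in the radial/tangential frame, and nonnegative scaling and translations preserve this; for an isometry $T(x)=Qx+b$ the orthogonal conjugation $\Hu\mapsto Q^T\Hu Q$ preserves the spectrum, and hence $\D_p(u\circ T)=(\D_p u)\circ T$.

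All three converses follow a single scheme: perturb $u$ so that the gradient at $x$ is large and in a prescribed direction, forcing the normalized Rayleigh quotient in $\Delta_p$ to approach $\langle \Hu v,v\rangle$ for an arbitrary unit $v$. For (a)$\Rightarrow$(d), take $l(y)=Ma^Ty$, divide the inequality $\Delta_p(u+l)(x)\leq 0$ by the positive factor $|\nabla u+Ma|^{p-2}$, and send $M\to\infty$ to extract $\Delta u+(p-2)\langle \Hu a,a\rangle\leq 0$ for every unit $a$, hence $\D_p u\leq 0$. For (b)$\Rightarrow$(d), place the pole of $c\,w_{n,p}\circ T$ at $x_0=x+Rv$ and choose $c$ so the perturbation's gradient at $x$ is $Mv$; a short calculation shows its Hessian at $x$ is of order $M/R$, so $R\to\infty$ reduces the case to (a). For (c)$\Rightarrow$(d), take $T$ a reflection in a hyperplane through $x$ normal to $e$: the key identity is that the cross terms $eh^T+he^T$ appearing in $H(u+u\circ T)=2\Hu-2(eh^T+he^T)+4\lambda ee^T$ annihilate against the induced gradient $w=2P_{e^\perp}\nabla u$, so one reads off $\Delta u+(p-2)\langle \Hu\hat w,\hat w\rangle\leq 0$; varying $e$ lets $\hat w$ cover an open dense subset of the unit sphere, and continuity finishes. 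Critical points of $u$ are handled via a translation-type isometry or approximation by noncritical points.

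For the rigidity statement (e), I would subtract the $p$-harmonic identity $\Delta u+(p-2)\langle \Hu\hat g,\hat g\rangle=0$ (valid where $\nabla u\neq 0$) from $\Delta u+(p-2)\lambda_n\leq 0$; since $p>2$, this forces $\langle \Hu\hat g,\hat g\rangle=\lambda_n$, so $\nabla u$ is pointwise a top eigenvector of $\Hu$. The identity $\nabla|\nabla u|=\Hu\hat g=\lambda_n\hat g\parallel\nabla u$ then yields $|\nabla u|=f(u)$, which forces the level sets to be parallel translates of a single hypersurface and the gradient flow lines to be straight rays orthogonal to them; together with $\Delta_p u=0$ this collapses $u$ to a radial $p$-Laplace ODE in some effective dimension $k\leq n$ (read off from the eigenspace structure of $\Hu$), whose nonconstant solution is $w_{k,p}$ up to constants. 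The main obstacle is this last passage: promoting the pointwise eigenvector condition to a genuine affine splitting of $\mathbb{R}^n$ so that "cylindrical" matches Definition~\ref{cyldef} globally, rather than only to first order, is where the real work lies.
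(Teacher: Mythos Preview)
Your arguments for (i) and for the equivalences (a)--(d) in (ii) are correct. They differ from the paper only in tactics: for (a)$\Rightarrow$(d) the paper makes the single explicit choice $a=\xi_u-\nabla u(x_0)^T$, so that $\nabla(u+l)(x_0)=\xi_u$ exactly and no limit $M\to\infty$ is needed; for (c)$\Rightarrow$(d) the paper reflects about the \emph{line} $\mathbb{R}\xi_u$ (i.e.\ $R=2\xi_u\xi_u^T-I$) rather than a hyperplane, which forces the combined gradient to lie along $\xi_u$ directly and avoids your ``vary $e$, cover a dense set, pass to the closure'' manoeuvre. Your (b)$\Rightarrow$(d) is essentially identical to the paper's. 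These are cosmetic differences; your limiting arguments are valid, and your handling of critical points by approximation is fine since $\D_p u$ is continuous for $u\in C^2$.

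For (e) your start is right and matches the paper: from $\Delta_p u=0=\D_p u$ one gets $\hat g^T\Hu\hat g=\lambda_u$, hence $\Hu\nabla u^T=\lambda_u\nabla u^T$, hence $|\nabla u|^2=f(u)$. The step you flag as the real obstacle is filled in the paper not by a direct geometric argument but by recognising that $u$ is \emph{isoparametric}: one also has $\Delta u=-(p-2)\lambda_u=-(p-2)f'(u)=:g(u)$, so both conditions $|\nabla u|^2=2f(u)$ and $\Delta u=g(u)$ hold. The paper then invokes Segre's 1938 classification of isoparametric hypersurfaces in $\mathbb{R}^n$ (each connected level set is an open piece of a hyperplane, a sphere, or a cylinder $S^{k-1}\times\mathbb{R}^{n-k}$), which makes $u$ $k$-cylindrical; the ODE part of the Cylindrical Equivalence (Proposition~\ref{D_p prop}(3)) then forces $u$ to be a cylindrical fundamental solution. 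Your sketch (straight gradient flow lines, parallel level sets) is the beginning of a direct proof of Segre's theorem, but the phrase ``parallel translates of a single hypersurface'' is not accurate (concentric spheres are parallel but not congruent), and getting the clean $S^{k-1}\times\mathbb{R}^{n-k}$ splitting from the eigenstructure alone is genuinely nontrivial --- which is why the paper outsources it to Segre.
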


\begin{theorem}\label{mainthm2}
Let \(2\leq p\leq\infty\) and let \(u_1,\dots,u_N\) be radial \(p\)-superharmonic functions in \(\mathbb{R}^n\). Then the sum
\[\sum_{i=1}^N u_i(x-y_i) + K(x),\qquad y_i\in\mathbb{R}^n,\]
is \(p\)-superharmonic in \(\mathbb{R}^n\) for any concave function \(K\).
\end{theorem}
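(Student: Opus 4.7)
My plan is to reduce Theorem \ref{mainthm2} to Theorem \ref{mainthm1}(i) by showing that every radial \(p\)-superharmonic function is, in fact, dominative \(p\)-superharmonic, and then to handle non-smoothness by a smooth approximation.

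First, I would establish that if \(u(x)=g(|x|)\) is \(p\)-superharmonic on \(\mathbb{R}^n\), then the profile \(g\) is non-increasing on \([0,\infty)\). This follows from the minimum principle: comparing \(u\) with the constant \(g(R)\) on the ball \(B_R\) yields \(g(r)\geq g(R)\) for every \(r\leq R\), and letting \(R\) vary gives the monotonicity.

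Second, in the smooth case, the Hessian of a radial \(u\in C^2\) at a point \(x\neq 0\) has eigenvalues \(g''(r)\) in the radial direction and \(g'(r)/r\) with multiplicity \(n-1\) in the tangential directions. Since \(g'\leq 0\) by the first step, the tangential eigenvalue \(g'(r)/r\) is non-positive, and a short case analysis on the ordering of the two eigenvalues then gives \(\D_p u\leq 0\): when \(g''(r)\geq g'(r)/r\) the expression coincides with \(\Delta_p u/|g'|^{p-2}\leq 0\); when \(g''(r)\leq g'(r)/r\) both eigenvalues are non-positive and the defining combination is automatically \(\leq 0\). A concave \(K\) has Hessian \(\leq 0\), so trivially \(\D_p K\leq 0\). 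Thus in the \(C^2\) setting, Theorem \ref{mainthm1}(i) applies and gives \(\Delta_p\bigl[\sum_i u_i(x-y_i)+K\bigr]\leq 0\) directly.

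Third, the passage to the general viscosity setting proceeds by approximation. The radial supersolution condition is equivalent to the single scalar function \(r\mapsto r^{n-1}|g_i'(r)|^{p-2}g_i'(r)\) being non-increasing; mollifying this monotone function and inverting yields smooth non-increasing \(p\)-superharmonic approximants \(u_i^\varepsilon\) of each \(u_i\), and \(K\) is approximated by a smooth concave \(K^\varepsilon\) in the usual way. Applying the smooth conclusion of Step 2 to each \(\varepsilon\) and invoking the closedness of viscosity supersolutions under local uniform limits then delivers the full theorem.

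The main obstacle I anticipate is in this final step: controlling the approximation up to and including the poles \(y_i\), especially when some \(u_i\) is singular there (as with a fundamental solution for \(p\leq n\), where \(u_i(0)=+\infty\)). One must verify both that the smooth approximants converge in a sense strong enough to preserve the viscosity supersolution property at the poles, and that the sum behaves correctly after superimposing these singularities at distinct centers. This is precisely the point at which the introduction's remark --- that important properties of the dominative operator from the smooth case also hold in the viscosity sense --- has to be cashed out.
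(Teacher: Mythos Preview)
Your overall strategy---show that each radial \(p\)-superharmonic term and the concave \(K\) are dominative \(p\)-superharmonic, then sum---is exactly the paper's. Your smooth-case analysis in Step~2 is correct and matches the paper's reasoning (in fact the paper encodes your eigenvalue computation in the Cylindrical Equivalence property).

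The genuine divergence is in Step~3. The paper does \emph{not} approximate. Instead it proves the viscosity versions of Radial Equivalence and Sublinearity directly, via a single structural lemma: at every point \(x_0\) with \(u(x_0)<\infty\), a radial \(p\)-superharmonic \(u\) is touched from \emph{above} by a scaled fundamental solution \(h(x)=C_1 w_{n,p}(x)+C_2\), \(C_1\geq 0\). This is obtained by comparison in annuli and a monotonicity analysis of the scaling constants \(C_{ab}\). Once you have such an \(h\), Radial Equivalence is immediate (any test function \(\phi\) from below satisfies \(\mathcal{H}\phi(x_0)\leq\mathcal{H}h(x_0)\), hence \(\D_p\phi(x_0)\leq\D_p h(x_0)=0\)), and Sublinearity follows by subtracting \(h\): if \(\phi\) touches \(u+v\) from below at \(x_0\), then \(\phi-h\) touches \(v\) from below, so \(\D_p(\phi-h)\leq 0\) and the smooth sublinearity gives \(\D_p\phi\leq\D_p(\phi-h)+\D_p h\leq 0\). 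No limits, no poles to worry about.

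Your approximation route is not obviously wrong, but the sketch has real soft spots. The claimed characterization ``\(r\mapsto r^{n-1}|g'|^{p-2}g'\) non-increasing'' presupposes that \(g'\) exists a.e.\ and that this quantity encodes the viscosity inequality, which you would have to prove first (the paper effectively does this via the one-sided limits \(C_b^{\pm}\)). The ``mollify and invert'' step is underspecified: you must produce smooth \(u_i^\varepsilon\) that are genuinely \(p\)-superharmonic (not merely approximately so) and that converge in a mode compatible with stability of viscosity supersolutions, including at the poles where \(u_i=+\infty\). These are exactly the difficulties you flag at the end, and the paper's ``barrier from above'' lemma is precisely the device that sidesteps them.
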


To keep things simple, we have not extended Theorem \ref{mainthm2} to cover \emph{cylindrical} \(p\)-superharmonic functions.

A function \(f\) in \(\mathbb{R}^n\) is \emph{radial} if there exists a one-variable function \(F\) so that \(f(x) = F(|x|)\). As usual, \(|x| := \sqrt{x_1^2 + \cdots + x_n^2}\) denotes the Euclidean norm of \(x\). An equivalent definition is the symmetry condition \(f(Qx) = f(x)\) for every \(n\times n\) orthogonal matrix \(Q\). Radial functions have concentric spherical level-sets, and so do the \emph{translated} ones \(f(x-x_0)\). We generalize this notion to functions having concentric \emph{cylindrical} level-sets:

\begin{definition}\label{cyldef}
A function \(f\) in \(\mathbb{R}^n\) is \textbf{cylindrical} (or \(k\)-cylindrical) if there exists a one-variable function \(F\), an integer \(1\leq k\leq n,\) and an \(n\times k\) matrix \(Q\) with orthonormal columns, i.e. \(Q^TQ = I_k\), so that
\[f(x) = F\Big(|Q^T(x-x_0)|\Big)\]
for some \(x_0\in\mathbb{R}^n\).

We say that a function \(u\) in \(\mathbb{R}^n\) is a \textbf{cylindrical fundamental solution} (to the \(p\)-Laplace Equation) if \(u\) is on the form
\[u(x) = C_1w_{k,p}\Big(Q^T(x-x_0)\Big) + C_2,\qquad C_1\geq 0,\]
for some \(k\), \(Q\) and \(x_0\) as above.
\end{definition}

Notice that a 1-cylindrical fundamental solution
\[u(x) = -C_1|q^T(x-x_0)| + C_2 = a^Tx + b\]
is \emph{affine} in the regions where it is differentiable, while an \(n\)-cylindrical fundamental solution
\[u(x) = C_1w_{n,p}\Big(Q^T(x-x_0)\Big) + C_2 = C_1w_{n,p}(x-x_0) + C_2\]
is a translated radial function.

A calculation shows that the cylindrical fundamental solutions solve the \(p\)-Laplace Equation, except on the \((n-k)\)-dimensional affine space \(\left\{x\;|\; Q^T(x-x_0) = 0\right\}\) where the functions become singular. When \(p<\infty\) a Dirac delta is produced. For example, setting \(Q = (\bfe_1,\dots,\bfe_k)\), \(x_0=0\) and splitting \(x = (y,z)\in\mathbb{R}^n\) in \(y\in\mathbb{R}^k\), \(z\in\mathbb{R}^{n-k}\) yields
\begin{align*}
\int_{\mathbb{R}^n}\Delta_p w_{k,p}\big(Q^Tx\big)\phi(x)\dd x
	&= \int_{\mathbb{R}^{n-k}}\int_{\mathbb{R}^k}\Delta_p w_{k,p}(y)\phi(y,z)\dd y\dd z\\
	&= -C\int_{\mathbb{R}^{n-k}}\phi(0,z)\dd z,\qquad C = C(k,p) > 0,
\end{align*}
for every \(\phi\in C^\infty_0(\mathbb{R}^n)\).
Moreover, we shall see that these solutions have an essential property that is not shared by any other \(p\)-harmonic function:
\begin{framed}\noindent
The gradient of a cylindrical fundamental solution is an eigenvector corresponding to the largest eigenvalue of the Hessian matrix.
\end{framed}

\section{The Dominative p-Laplace Operator}

\subsection{Preliminary basics and notation}

For a function \(u\in C^2(\Omega)\), we denote by \(\mathcal{H}u = \mathcal{H}u(x)\) the Hessian matrix of \(u\) at \(x\).
We list some elementary and useful facts about this matrix.
\begin{itemize}
	\item \(\mathcal{H}u\) is a symmetric \(n\times n\) matrix: \(\mathcal{H}u^T = \mathcal{H}u\).
	\item \(\mathcal{H}u\) has \(n\) real eigenvalues, which we label in increasing order:
	\[\lambda_1\leq\dots\leq\lambda_n.\]
	The \emph{largest} eigenvalue, \(\lambda_n\), has special importance and is denoted by \(\lambda_u\) to indicate its origin. Sometimes we are inconsistent with the notation and write \(\lambda_X\) for the largest eigenvalue of a symmetric matrix \(X\).
	\item The eigenvectors, \(\xi_1,\dots,\xi_n\), of \(\mathcal{H}u\) can be chosen to be orthonormal: \(\xi_i^T\xi_j = \delta_{ij}.\)
	A unit eigenvector corresponding to the largest eigenvalue \(\lambda_u\) is labeled \(\xi_u\).
	\item \(\tr\mathcal{H}u = u_{x_1x_1}+\cdots+u_{x_nx_n} = \lambda_1 +\cdots +\lambda_n = \Delta u\).
	In general
	\[\Delta u = \sum_{i=1}^n z_i^T\mathcal{H}u\,z_i\]
	for every orthonormal set \(\{z_1,\dots,z_n\}\subseteq\mathbb{R}^n\).
	\item We have for any vector \(z\in\mathbb{R}^n\),
	\[\lambda_1|z|^2 \leq z^T\mathcal{H}u\,z \leq \lambda_u|z|^2,\]
	and
	\[\lambda_u = \max_{|z|=1}z^T\mathcal{H}u\,z.\]
	Conversely, if \(z\in\mathbb{R}^n\), \(|z|=1\) satisfies
	\[z^T\mathcal{H}u\,z = \lambda_u,\]
	then
	\[\mathcal{H}u\,z = \lambda_u z.\]
\end{itemize}
We adapt the convention that vectors/points in space are column vectors, \emph{except} gradients which are to be read as row vectors. 

\subsection{Definition and fundamental properties}

\begin{definition}\label{lpdef}
We define the \textbf{Dominative\footnote{The reader interested in \(p\)-\emph{sub}harmonic functions should consider the operator \[-\mathcal{D}_p[-u] = (p-2)\lambda_1 + \Delta u.\] Dare we suggest the name ``the Submissive \(p\)-Laplace Operator, \(\mathcal{S}_p\)''?} \(p\)-Laplace Operator}, \(\mathcal{D}_p\), as
\[\mathcal{D}_p u :=
\begin{cases}
(p-2)\lambda_u + \Delta u,\qquad & \text{when } 2\leq p <\infty,\\
\lambda_u, &\text{when }p =\infty.
\end{cases}\]
A \(C^2\) function \(u\) is \textbf{dominative \(p\)-superharmonic} if \(\D_pu\leq 0\) at each point in its domain.
\end{definition}
The expression \eqref{predef} is, of course, an alternative representation when \(p<\infty\).
Observe that \(\D_2 = \Delta_2 = \Delta\).

In low dimensions it is possible to express \(\mathcal{D}_pu\) in terms of the second-order partial derivatives \(u_{x_ix_j}\). In \(\mathbb{R}^2\) it can be calculated to be
\begin{align*}
\mathcal{D}_p u &= \frac{p}{2}(u_{xx} + u_{yy}) + \frac{p-2}{2}\sqrt{(u_{xx}-u_{yy})^2 + 4u_{xy}^2},\\
\mathcal{D}_\infty u &= \frac{1}{2}(u_{xx} + u_{yy}) + \frac{1}{2}\sqrt{(u_{xx}-u_{yy})^2 + 4u_{xy}^2}.
\end{align*}
We clearly see the nonlinearity introduced when \(p>2\).

The motivation behind Definition \ref{lpdef} came from the following observations:
By carrying out the differentiation in \eqref{ple}, we arrive at the identity
\begin{equation}
\Delta_p u = |\nabla u|^{p-2}\left((p-2)\frac{\nabla u\mathcal{H}u\nabla u^T}{|\nabla u|^2} + \Delta u\right).
\label{lpder}
\end{equation}
The normalized variant of the \(\infty\)-Laplacian appearing in \eqref{lpder} satisfies
\begin{equation}
\od_\infty u := \frac{\Delta_\infty u}{|\nabla u|^2} = \frac{\nabla u\mathcal{H}u\nabla u^T}{|\nabla u|^2} \leq \lambda_u = \mathcal{D}_\infty u.
\label{infnorm}
\end{equation}
Thus the normalized \(p\)-Laplacian also satisfies
\begin{equation}
\od_p u := \frac{\Delta_pu}{|\nabla u|^{p-2}} = (p-2)\od_\infty u + \Delta u \leq (p-2)\lambda_u + \Delta u = \mathcal{D}_p u
\label{pnorm}
\end{equation}
when \(0\leq p-2<\infty\).
When \(u\) is a fundamental solution we have equality in \eqref{infnorm} and \eqref{pnorm}. Since \(\mathcal{D}_p\) is \emph{sublinear} and invariant under translations, a very simple proof of the superposition principle for the fundamental solutions \eqref{crand} is produced. However, the above calculations are, for the moment, not valid at the poles or at critical points.

\begin{proposition}[Fundamental properties of \(\mathcal{D}_p\). Smooth case]\label{D_p prop}
Let \(u,v\in C^2(\Omega)\). Then the following holds pointwise for \(2\leq p\leq\infty\).
\begin{enumerate}[(1)]
	\item \textbf{Domination:}
	\[\Delta_pu \leq
\begin{cases}
|\nabla u|^{p-2}\mathcal{D}_p u,\qquad &2\leq p<\infty,\\
|\nabla u|^2 \mathcal{D}_\infty u, &p = \infty.
\end{cases}
\]
	\item \textbf{Sublinearity:}
	\begin{itemize}
		\item \(\mathcal{D}_p[u+v] \leq \mathcal{D}_p u + \mathcal{D}_p v\), and
		\item \(\mathcal{D}_p[\alpha u] = \alpha \mathcal{D}_p u,\qquad \alpha\geq 0\).
	\end{itemize}
	\item \textbf{Cylindrical Equivalence:} Assume \(u\) is \(k\)-cylindrical where the corresponding one-variable function \(U=U(r)\) satisfies \(\frac{U'}{r}\leq U''\). If \(k<n\), we also require that \(U''\geq 0\). Then
	\begin{equation}
	\Delta_pu =
\begin{cases}
|\nabla u|^{p-2}\mathcal{D}_p u,\qquad &2\leq p<\infty,\\
|\nabla u|^2 \mathcal{D}_\infty u, &p = \infty.
\end{cases}
	\label{cyleq}
	\end{equation}
	In particular, if \(u\) is a cylindrical fundamental solution, then \[\mathcal{D}_p u = 0 = \Delta_p u.\]
Moreover, if \(u\) is \(k\)-cylindrical and \(\mathcal{D}_p u = 0 = \Delta_p u\), then \(u\) is a \(k\)-cylindrical fundamental solution provided \(2<p<\infty\) and \(k\geq 2\).
	\item \textbf{Nesting Property:}
	\begin{itemize}
		\item If \(\mathcal{D}_p u\leq 0\), then \(\mathcal{D}_q u\leq 0\) for every \(2\leq q\leq p\).
		\item If \(\mathcal{D}_p u\geq 0\), then \(\mathcal{D}_q u\geq 0\) for every \(p\leq q\leq\infty\).
	\end{itemize}
	\item \textbf{Invariance:} \[\mathcal{D}_p [u\circ T] = (\mathcal{D}_p u)\circ T\] in \(T^{-1}(\Omega)\) for all isometries \(T\colon \mathbb{R}^n\to\mathbb{R}^n\).
\end{enumerate}
\end{proposition}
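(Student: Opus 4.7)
The five items split into four algebraic reductions --- (1), (2), (4) and (5) --- all of which follow from the max-characterization $\lambda_u = \max_{|z|=1}z^T\mathcal{H}u\,z$, the unitary-invariance of the spectrum, and a short sign analysis; and one genuinely geometric reduction --- item (3) --- which requires an explicit spectral decomposition of the Hessian of a cylindrical function. The hard part will be item (3), particularly its converse characterization of cylindrical fundamental solutions.

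For (1), I would invoke the identity \eqref{lpder} together with $\od_\infty u\leq \lambda_u$ from \eqref{infnorm} at noncritical points; at critical points both sides of the claimed inequality vanish, since $|\nabla u|^{p-2}$ kills the Hessian term for $p>2$, and the case $p=2$ is a trivial equality. For (2), positive homogeneity is immediate from $\mathcal{H}(\alpha u)=\alpha\mathcal{H}u$, while subadditivity reduces to $\lambda_{u+v}\leq\lambda_u+\lambda_v$: taking $\xi$ to be the top unit eigenvector of $\mathcal{H}(u+v)$ and applying the max-characterization to each summand gives Weyl's inequality, which together with linearity of $\Delta$ yields the claim. For (5), an isometry $T(x)=Mx+b$ has $M^TM=I$, so $\mathcal{H}(u\circ T)(x) = M^T\mathcal{H}u(T(x))M$, a similarity by an orthogonal matrix that preserves trace and spectrum pointwise. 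For (4), I would split on the sign of $\lambda_u$: if $\lambda_u\geq 0$, the map $q\mapsto (q-2)\lambda_u$ is nondecreasing, so $q\leq p$ gives $\mathcal{D}_qu\leq\mathcal{D}_pu\leq 0$; if $\lambda_u<0$, every eigenvalue is negative, so $\Delta u<0$ and $(q-2)\lambda_u\leq 0$, giving $\mathcal{D}_qu<0$ outright. The reverse implication is handled symmetrically, first ruling out $\lambda_u<0$ (which would force $\mathcal{D}_pu<0$) and then using the same monotonicity for $q\geq p$; the case $p=\infty$ is even simpler.

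For (3), the plan is to take $x_0=0$, set $y=Q^Tx\in\mathbb{R}^k$, $r=|y|$, and write $u(x)=U(r)$. A direct calculation then yields $\nabla u = (U'(r)/r)(Qy)^T$ and
\[
\mathcal{H}u = \frac{U'(r)}{r}QQ^T + \left(U''(r) - \frac{U'(r)}{r}\right)\frac{(Qy)(Qy)^T}{r^2},
\]
from which one reads off the spectrum: $U''(r)$ on the line spanned by $Qy$ (the gradient direction), $U'(r)/r$ with multiplicity $k-1$ on the remaining directions in the range of $Q$, and $0$ with multiplicity $n-k$ on $\ker Q^T$. The assumptions $U'/r\leq U''$ (together with $U''\geq 0$ when $k<n$) are exactly what is needed to make $U''$ the largest eigenvalue; since $\nabla u$ lies along its eigendirection, the inequalities in \eqref{lpder} and \eqref{infnorm} collapse to equalities, establishing \eqref{cyleq}. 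Specializing to a cylindrical fundamental solution verifies the hypotheses and the radial ODE $(p-1)U''+(k-1)U'/r=0$, so $\mathcal{D}_pu=0=\Delta_pu$. For the converse with $k\geq 2$ and $2<p<\infty$, I expect the condition $\Delta_pu=0$ to reduce, via the same decomposition, to that radial $p$-Laplace ODE, whose solution space is spanned by $w_{k,p}$ and the constants; a consistency check with $\mathcal{D}_pu=0$ then forces the spectral branch $\lambda_u=U''$ (the alternative branches $\lambda_u=U'/r$ and $\lambda_u=0$ lead to an incompatible second-order equation for $p>2$, $k\geq 2$), and $C_1\geq 0$ is dictated by $U''\geq 0$. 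Excluding these spurious branches in the converse is the step where I anticipate needing the most care.
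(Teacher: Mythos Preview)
Your proposal is correct and follows essentially the same route as the paper for all five items: the paper's proofs of (1), (2), (4), (5) are line-for-line what you describe, and for (3) the paper computes the same Hessian decomposition and reads off the identical spectrum $\{U'',\,U'/r\ (\text{mult.}\ k-1),\,0\ (\text{mult.}\ n-k)\}$. For the converse of (3) the paper subtracts the equation $(p-1)U''+(k-1)U'/r=0$ from $\mathcal{D}_pu=0$ to obtain $\max\{U'',U'/r[,0]\}=U''$ in one stroke, which is exactly your branch-elimination argument compressed; the only item you omit is the trivial edge case $Q^T(x-x_0)=0$, where the paper separately checks that $\nabla u=0$ so both sides of \eqref{cyleq} vanish.
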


\begin{proof}\textbf{Domination:}\\
Since
\[\Delta_\infty u = \nabla u\mathcal{H}u\nabla u^T \leq |\nabla u|^2\lambda_u = |\nabla u|^2\mathcal{D}_\infty u,\]
we also get, when \(\nabla u \neq 0\),
\begin{align*}
\Delta_p u &= |\nabla u|^{p-2}\left((p-2)\od_\infty u + \Delta u\right)\\
				&\leq |\nabla u|^{p-2}\left((p-2)\lambda_u + \Delta u\right)\\
				&= |\nabla u|^{p-2}\mathcal{D}_p u
\end{align*}
for \(0\leq p-2<\infty\). If \(\nabla u = 0\) or \(p=2\), the claim is trivial: The \(p\)-Laplacian is zero at critical points when \(p>2\).
\end{proof}

\begin{proof}\textbf{Sublinearity:}\\
Since
\begin{align*}
\mathcal{D}_\infty[u+v] &= \lambda_{u+v}\\
              &= \xi_{u+v}^T\mathcal{H}[u+v]\xi_{u+v}\\
				      &= \xi_{u+v}^T\big(\mathcal{H}u + \mathcal{H}v\Big)\xi_{u+v}\\
				      &= \xi_{u+v}^T\mathcal{H}u\xi_{u+v} + \xi_{u+v}^T\mathcal{H}v\xi_{u+v}\\
				      &\leq \lambda_u + \lambda_v\\
							&= \mathcal{D}_\infty u + \mathcal{D}_\infty v
\end{align*}
we also get
\begin{align*}
\mathcal{D}_p[u+v] &= (p-2)\lambda_{u+v} + \Delta[u+v]\\
				 &\leq (p-2)(\lambda_u + \lambda_v) + \Delta u + \Delta v\\
				 &= \mathcal{D}_p u + \mathcal{D}_p v
\end{align*}
for \(0\leq p-2<\infty\).

Also, if \(\alpha\geq 0\) and \(\lambda_u\) is the largest eigenvalue of \(\mathcal{H}u\), then \(\alpha\lambda_u\) is the largest eigenvalue of \(\mathcal{H}[\alpha u] = \alpha\mathcal{H}u\). Thus \(\lambda_{\alpha u} = \alpha\lambda_u\). This means that
\[\mathcal{D}_\infty[\alpha u] = \lambda_{\alpha u} = \alpha\lambda_u = \alpha \mathcal{D}_\infty u\]
and
\[\mathcal{D}_p[\alpha u] = (p-2)\lambda_{\alpha u} + \Delta[\alpha u] = \alpha \mathcal{D}_p u\]
for \(p<\infty\).
\end{proof}

\begin{proof}\textbf{Cylindrical Equivalence:}\\
Let \(1\leq k\leq n\), \(x_0\in\mathbb{R}^n\), let \(Q\) be an \(n\times k\) matrix with \(Q^TQ = I_k\), and let
\[u(x) = U\Big(|Q^T(x-x_0)|\Big).\]
Write \(y\colon\Omega\subseteq\mathbb{R}^n\to\mathbb{R}^k\),
\[y(x) := Q^T(x-x_0).\]
Assume first that \(u\) is \(C^2\) at a point \(x_1\in\Omega\) where \(y(x_1) = 0\). A direct calculation will confirm that
\[\nabla u(x_1) q = - \nabla u(x_1) q\qquad\forall\,q\in\mathbb{R}^n.\]
Therefore, \(\nabla u(x_1) = 0\) and the equality in \eqref{cyleq} is trivial.

The Jacobian matrix of \(y\) is \(Dy = Q^T\), and
\[\nabla|y| = \frac{y^T}{|y|}Dy = \hat{y}^TQ^T,\qquad \hat{y} := \frac{y}{|y|}.\]
Moreover,
\[\nabla\frac{1}{|y|} = -\frac{y^T}{|y|^3}Dy\]
so
\begin{align*}
D\hat{y} &= \frac{Dy}{|y|} + y\nabla\frac{1}{|y|}\\
         &= \frac{1}{|y|}\left(Dy - y\frac{y^T}{|y|^2}Dy\right)\\
				 &= \frac{1}{|y|}\left(Q^T - \hat{y}\hat{y}^TQ^T\right).
\end{align*}
Now, \(u(x) = U(|y(x)|)\) and
\begin{align*}
\nabla u &= U'\nabla|y|\\
         &= U'\hat{y}^TQ^T.
\end{align*}
The Hessian matrix is
\begin{align*}
\mathcal{H}u &= D\left[\nabla u^T\right]\\
             &= D\left[U'Q\hat{y}\right]\\
						 &= Q\hat{y}U''\nabla|y| + U'Q D\hat{y}\\
						 &= U''Q\hat{y}\hat{y}^TQ^T + \frac{U'}{|y|}Q\left(Q^T - \hat{y}\hat{y}^TQ^T\right)\\
						 &= Q\left\{U''\hat{y}\hat{y}^T + \frac{U'}{|y|}\left(I_k - \hat{y}\hat{y}^T\right)\right\}Q^T
\end{align*}
with trace
\[\Delta u = U'' + (k-1)\frac{U'}{|y|}.\]
There are \(n-k\) perpendicular constant eigenvectors in the null-space of \(Q^T\) with zero eigenvalues. The (transposed) gradient is in the column-space of \(Q\) and is an eigenvector:
\begin{align*}
\mathcal{H}u\nabla u^T &= Q\left\{U''\hat{y}\hat{y}^T + \frac{U'}{|y|}\left(I_k - \hat{y}\hat{y}^T\right)\right\}Q^T(U'Q\hat{y})\\
	&= U'Q\left\{U''\hat{y}\hat{y}^T + \frac{U'}{|y|}\left(I_k - \hat{y}\hat{y}^T\right)\right\}\hat{y}\\
	&= U'Q\Big\{U''\hat{y} + 0\Big\}\\
	&= U''\nabla u^T.
\end{align*}
Finally there are \(k-1\) eigenvectors \(\xi = \xi(x) \in\mathbb{R}^n\) in the column-space of \(Q\) that are perpendicular to \(\nabla u\), i.e.
\[\hat{y}^TQ^T\xi=0\qquad\text{and}\qquad \xi = Q\tilde{\xi}\quad\text{for some \(\tilde{\xi}\in\mathbb{R}^k\):}\]
\begin{align*}
\mathcal{H}u\xi &= Q\left\{U''\hat{y}\hat{y}^T + \frac{U'}{|y|}\left(I_k - \hat{y}\hat{y}^T\right)\right\}Q^T\xi\\
                &= Q\left\{0 + \frac{U'}{|y|}(Q^T\xi - 0)\right\}\\
								&= \frac{U'}{|y|}QQ^T\xi\\
								&= \frac{U'}{|y|}QQ^TQ\tilde{\xi} = \frac{U'}{|y|}Q\tilde{\xi} = \frac{U'}{|y|}\xi.
\end{align*}
Thus the \(n\) eigenvalues of \(\mathcal{H}u\) are
\(\frac{U'}{|y|}\) with multiplicity \(k-1\), \(0\) with multiplicity \(n-k\) and \(U''\) with multiplicity \(1\).

By the assumption \(\frac{U'}{r}\leq U''\) and \(U''\geq 0\) if \(k<n\) it is clear that the largest eigenvalue is \(\lambda_u = U''\) and it follows that
\[\Delta_\infty u = \nabla u\mathcal{H}u\nabla u = |\nabla u|^2 U'' = |\nabla u|^2\mathcal{D}_\infty u\]
and
\begin{align*}
\Delta_p u &= |\nabla u|^{p-2}\left((p-2)\frac{\nabla u\mathcal{H}u\nabla u^T}{|\nabla u|^2} + \Delta u\right),\qquad\nabla u\neq 0,\\
           &= |\nabla u|^{p-2}\left((p-2)\lambda_u + \Delta u\right)\\
					 &= |\nabla u|^{p-2}\mathcal{D}_p u.
\end{align*}
Again, the equality is trivial if \(\nabla u = 0\).

Now assume \(u(x) = U(|y|)\) is a \(C^2\) \(k\)-cylindrical fundamental solution:
\[U(r) = 
\begin{cases}
-C_1\frac{p-1}{p-k}r^\frac{p-k}{p-1} + C_2, & p\neq k,\\
-C_1\ln r + C_2, & p = k,\\
-C_1r + C_2, & p=\infty\;\text{ or }\; k=1,
\end{cases}\]
\(C_1\geq 0\), \(C_2\in\mathbb{R}\).
Then
\[U'(r) = 
\begin{cases}
-C_1r^\frac{1-k}{p-1}, & 2\leq p<\infty,\\
-C_1, & p=\infty
\end{cases}\]
and
\[U''(r) = 
\begin{cases}
-C_1\frac{1-k}{p-1}r^\frac{2-p-k}{p-1}, & 2\leq p<\infty,\\
0, & p=\infty.
\end{cases}\]
We see that \(U'' \geq 0 \geq U'/r\) in every case, so \(\mathcal{D}_\infty u = \lambda_u = U'' = 0\) if \(p=\infty\), and
\[\Delta_\infty u = |\nabla u|^2\mathcal{D}_\infty u = 0.\]
Also,
\begin{align*}
\mathcal{D}_p u &= (p-2)U'' + U'' + (k-1)\frac{U'}{|y|}\\
  		&= (p-1)U'' + (k-1)\frac{U'}{|y|}\\
			&= -C_1\left((1-k)|y|^\frac{2-p-k}{p-1} + (k-1)|y|^{\frac{1-k}{p-1}-1}\right)\\
			&= 0
\end{align*}
and
\[\Delta_p u = |\nabla u|^{p-2}\mathcal{D}_p u = 0\]
when \(p<\infty\).

Finally, assume \(u\) is \(k\)-cylindrical, \(2\leq k\leq n\), \(2<p<\infty\) and \(\Delta_p u = 0 = \D_p u\).
The ODE for \(U\) produced by \(\Delta_p u = 0\) is
\begin{equation}
0 = \Delta_p^N u = (p-1)U'' + (k-1)\frac{U'}{r}
\label{Lpode}
\end{equation}
with general solution \(U\) satisfying
\[U'(r) = C r^{-\frac{k-1}{p-1}},\qquad C\in\mathbb{R}.\]
By definition of the cylindrical fundamental solutions, we only need to show that \(C\leq 0\). The equation \(\D_p u = 0\) gives
\begin{align}
0 &= (p-2)\max\left\{U'',\frac{U'}{r}\right\} + U'' + (k-1)\frac{U'}{r}, &&k = n,\label{Dpn}\\
0 &= (p-2)\max\left\{U'',\frac{U'}{r},0\right\} + U'' + (k-1)\frac{U'}{r}, &&2\leq k < n\label{Dpk}.
\end{align}
Subtract \eqref{Lpode} from \eqref{Dpn} or \eqref{Dpk} and 
divide by \(p-2\) to obtain the condition
\[0 = \max\left\{U'',\frac{U'}{r}\right\} - U''.\]
That is
\[-C\frac{k-1}{p-1}r^{-\frac{k+p-2}{p-1}} = U'' \geq \frac{U'}{r} = Cr^{-\frac{k+p-2}{p-1}}\]
which is true only if \(C\leq 0\).
\end{proof}

\begin{proof}\textbf{Nesting Property:}\\
Let \(2\leq q\leq p \leq\infty\) and assume \(\mathcal{D}_p u\leq 0\). For each \(x\in\Omega\) we consider two cases.

If \(\mathcal{D}_\infty u = \lambda_u < 0\) then every eigenvalue is negative and
\[\mathcal{D}_q u = \lambda_1 + \cdots + \lambda_{n-1} + (q-1)\lambda_u < 0.\]
If \(\mathcal{D}_\infty u = \lambda_u \geq 0\) then also
\begin{align*}
\mathcal{D}_q u &= (q-2)\lambda_u + \Delta u\\
      &\leq (p-2)\lambda_u + \Delta u\\
			&= \mathcal{D}_p u \leq 0.
\end{align*}

Now let \(2\leq p\leq q\leq\infty\) and assume
\[\mathcal{D}_p u = (p-1)\lambda_u + \lambda_1 + \cdots + \lambda_{n-1}\geq 0.\]
Then, obviously \(\lambda_u\geq 0\) and
\begin{align*}
\mathcal{D}_q u &= (q-2)\lambda_u + \Delta u\\
      &\geq (p-2)\lambda_u + \Delta u\\
			&= \mathcal{D}_p u \geq 0.
\end{align*}

\end{proof}

\begin{proof}\textbf{Invariance:}\\
An isometry in \(\mathbb{R}^n\) is on the form \(T(x) = Q^T(x-x_0)\) for some \(x_0\in\mathbb{R}^n\) and some constant orthogonal \(n\times n\) matrix \(Q\): \(Q^TQ = I\).

Define \(v(x) := u(T(x))\) on \(T^{-1}(\Omega)\), i.e. \(T(x)\in\Omega\). Write \(y := T(x)\). Then \(\nabla v(x) = \nabla u(y)Q^T\) and
\[\mathcal{H}v(x) = Q\mathcal{H}u(y)Q^T.\]
So \(\lambda_v(x) = \lambda_u(y)\), proving the case \(p=\infty\), since
\[\lambda_v(x) = \max_{|z|=1}z^T\mathcal{H}v(x)z = \max_{|z|=1}z^TQ\mathcal{H}u(y)Q^Tz = \max_{|z|=1}z^T\mathcal{H}u(y)z = \lambda_u(y).\]
Also
\[\Delta v(x) = \tr\mathcal{H}v(x) = \tr Q\mathcal{H}u(y)Q^T = \tr \mathcal{H}u(y) = \Delta u(y).\]
Therefore
\begin{align*}
(\mathcal{D}_p[u\circ T])(x) &= (\mathcal{D}_p v)(x)\\
                 &= (p-2)\lambda_v(x) + \Delta v(x)\\
								 &= (p-2)\lambda_u(y) + \Delta u(y)\\
								 &= (\mathcal{D}_pu)(T(x)).
\end{align*}
\end{proof}

\section{The proof of Theorem \ref{mainthm1}}

That dominative \(p\)-superharmonicity of the terms is a sufficient condition for the sum to be \(p\)-superharmonic is now an immediate consequence of Proposition \ref{D_p prop}:

\begin{proof}[Proof of Theorem \ref{mainthm1} (i)]
Let \(2\leq p\leq\infty\) and let \(u_1,\dots,u_N\) be dominative \(p\)-superharmonic \(C^2\) functions. That is, \(\mathcal{D}_pu_i\leq 0\). Write
\[V(x) := \sum_{i=1}^N u_i(x)\]
to denote the sum.
Then \(V\) is \(C^2\) and
\begin{align*}
\Delta_p V &\leq |\nabla V|^{p-2}\mathcal{D}_p\left[\sum_{i=1}^N u_i\right],\qquad\text{by the \emph{Domination} (1),}\\
           &\leq |\nabla V|^{p-2}\sum_{i=1}^N \mathcal{D}_p u_i,\qquad\text{by the \emph{Sublinearity} (2),}\\
					 &\leq 0.
\end{align*}
The calculations are the same when \(p = \infty\). 
\end{proof}
Notice that a sum of fundamental solutions, \(V(x) = \sum_{i=1}^N c_iw_{n,p}(x-~y_i)\) in a domain not containing the singularities, is just a special case by the \emph{Cylindrical Equivalence} (3) and the \emph{Invariance} (5).

\bigskip
We restate and prove the first part of Theorem \ref{mainthm1} (ii).

\begin{proposition}\label{lin}
Let \(2\leq p\leq\infty\) and let \(u\in C^2(\Omega)\). Then the following properties are equivalent.
\begin{enumerate}[a)]
	\item \(u(x) + a^Tx\) is \(p\)-superharmonic for every linear function \(a^Tx\).
	\item \(u(x) + cw_{n,p}(x-y)\) is \(p\)-superharmonic in \(\Omega\setminus\{y\}\) for every \(c\geq 0\) and every \(y\in\mathbb{R}^n\).
	\item \(u + u\circ T\) is \(p\)-superharmonic in \(\Omega\cap T^{-1}(\Omega)\) for every isometry \(T\colon\Omega\to\mathbb{R}^n\).
	\item \(u\) is a dominative \(p\)-superharmonic function.
\end{enumerate}
\end{proposition}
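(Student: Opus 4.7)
My plan is to establish $(d)\Rightarrow(a),(b),(c)$ in one stroke from Proposition \ref{D_p prop} together with the already-proved Theorem \ref{mainthm1}(i), and then handle each of $(a)\Rightarrow(d)$, $(b)\Rightarrow(d)$, $(c)\Rightarrow(d)$ individually. The forward direction only requires noting that each added summand is itself dominative $p$-superharmonic, so the sum inherits that property by Sublinearity and is therefore $p$-superharmonic by Domination: linear functions satisfy $\mathcal{D}_p(a^Tx)=0$ since their Hessian vanishes, the fundamental solution $c\,w_{n,p}(\cdot-y)$ satisfies $\mathcal{D}_p\equiv 0$ by the Cylindrical Equivalence, and $u\circ T$ inherits $\mathcal{D}_p(u\circ T)=(\mathcal{D}_p u)\circ T\le 0$ by Invariance.

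The common strategy for the three reverse implications is to evaluate $\Delta_p v(x_0)\le 0$ for a specifically chosen added term such that the gradient of $v$ at $x_0$ is aligned with an eigenvector of $\mathcal{H} v(x_0)$, collapsing the identity \eqref{lpder} into a positive multiple of $\mathcal{D}_p u(x_0)$. For $(a)\Rightarrow(d)$ this is immediate: pick $a^T=t\xi_u(x_0)^T-\nabla u(x_0)$ with $t>0$, so $\nabla v(x_0)=t\xi_u^T$ is the top eigenvector of $\mathcal{H} v(x_0)=\mathcal{H} u(x_0)$ and \eqref{lpder} gives $\Delta_p v(x_0)=t^{p-2}\mathcal{D}_p u(x_0)\le 0$. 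For $(b)\Rightarrow(d)$ I place the pole at $y=x_0+s\xi_u(x_0)$: the Hessian of $c\,w_{n,p}(\cdot-y)$ is simultaneously diagonalized with $\mathcal{H} u(x_0)$ in the latter's eigenbasis, with $\xi_u$ as the eigenvector for $cU''(s)>0$, and the fundamental-solution identity $(p-2)U''(s)+\Delta w_{n,p}(s)=0$ makes the $c$-divergent contributions to $(p-2)\,(\text{Rayleigh quotient})+\Delta v$ cancel, leaving $\mathcal{D}_p u(x_0)+O(1/c)$. Dividing by $|\nabla v|^{p-2}>0$ and letting $c\to\infty$ delivers $\mathcal{D}_p u(x_0)\le 0$; the case $p=\infty$ is analogous with $U''(s)=0$.

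The main obstacle is $(c)\Rightarrow(d)$, which I handle by a one-parameter family of reflections and a limit. Fix $x_0$ with $\nabla u(x_0)\ne 0$ and choose a unit $w\in\xi_u(x_0)^\perp$ so that the gradient vector lies in the 2-plane $P=\spa(\xi_u,w)$ (take $w$ parallel to the $\xi_u^\perp$-projection of $\nabla u(x_0)$; any unit $w\perp\xi_u$ works if that projection vanishes). For $0<\alpha\le\pi/4$, let $T_\alpha$ be the reflection through the hyperplane at $x_0$ with normal $\nu_\alpha=\cos\alpha\,w-\sin\alpha\,\xi_u\in P$, and set $\eta_\alpha:=\sin\alpha\,w+\cos\alpha\,\xi_u\in P$, which satisfies $\eta_\alpha\cdot\nu_\alpha=0$. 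A direct computation shows that $\nabla v(x_0)=2(\nabla u(x_0)\cdot\eta_\alpha)\eta_\alpha$, while $Q\eta_\alpha=\eta_\alpha$ yields the clean identity $\eta_\alpha^T\mathcal{H} v(x_0)\eta_\alpha=2\eta_\alpha^T\mathcal{H} u(x_0)\eta_\alpha=2\big(\cos^2\alpha\,\lambda_u+\sin^2\alpha\,w^T\mathcal{H} u(x_0)w\big)$, independently of any cross terms between $P$ and $P^\perp$ in $\mathcal{H} v(x_0)$. Combined with the unconditional $\Delta v(x_0)=2\Delta u(x_0)$, the identity \eqref{lpder} reads
\[\Delta_p v(x_0)=|\nabla v|^{p-2}\big[(p-2)\,\eta_\alpha^T\mathcal{H} v(x_0)\eta_\alpha+2\Delta u(x_0)\big]\le 0,\]
and since $|\nabla v|>0$ for $\alpha>0$ whenever $\nabla u(x_0)\ne 0$, the bracket is $\le 0$; letting $\alpha\to 0^+$ sends the Rayleigh quotient to $2\lambda_u$ and produces $\mathcal{D}_p u(x_0)\le 0$ (the case $p=\infty$ is analogous via $\Delta_\infty v=|\nabla v|^2\,\hat{\nabla v}^T\mathcal{H} v\hat{\nabla v}$). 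The delicate sub-case is the "orthogonal" alignment $\nabla u(x_0)\perp\xi_u(x_0)$, where $|\nabla v|$ scales as $\sin\alpha$, but the bracket retains a finite non-trivial limit so the argument survives. Critical points $\nabla u(x_0)=0$ are absorbed by continuity of $\mathcal{D}_p u$, the degenerate case $\nabla u\equiv 0$ on an open subset being trivial.
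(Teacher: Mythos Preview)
Your argument is correct. The implications $(d)\Rightarrow(a),(b),(c)$ and $(a)\Rightarrow(d)$ coincide with the paper's, but $(b)\Rightarrow(d)$ and $(c)\Rightarrow(d)$ follow genuinely different routes.

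For $(b)\Rightarrow(d)$ the paper places the pole at $y_s=x_0-\nabla u(x_0)^T+s\xi_u$ and tunes the constant $c_s$ so that $\nabla v(x_0)=s\xi_u^T$ \emph{exactly}; the remainder $(p-2)\xi_u^T\mathcal{H}f_s\xi_u+\Delta f_s$ is then computed explicitly and shown to vanish as $s\to\infty$. You instead fix the pole on the ray $x_0+s\xi_u$ and send $c\to\infty$, using the ODE $(p-1)U''+(n-1)U'/r=0$ to kill the $c$-linear terms and controlling the residual Rayleigh-quotient error as $O(1/c)$; this works because the error $1-(\hat v\cdot\xi_u)^2=|q_\perp|^2/|\nabla v|^2=O(1/c^2)$ beats the factor $c$ in $c\,\hat v^T\mathcal{H}w\,\hat v$. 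The paper's version yields closed-form expressions at each step; yours is shorter but relies on an asymptotic estimate.

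For $(c)\Rightarrow(d)$ the paper uses a single isometry $R=2\xi_u\xi_u^T-I$ (reflection about the line $\ell=\spa(\xi_u)$), computes $\Delta_p^N V|_\ell=(p-2)\xi_u^T\mathcal{H}u\,\xi_u+\Delta u$, and then needs a separate Taylor-expansion argument along $\ell$ to handle the degenerate case $\nabla u(x_0)\cdot\xi_u=0$. Your one-parameter family of hyperplane reflections $T_\alpha$ with axis $\eta_\alpha\to\xi_u$ treats that case uniformly: since $\nabla u(x_0)\in\spa(\xi_u,w)$ by construction, $\nabla v(x_0)=2(\nabla u\cdot\eta_\alpha)\eta_\alpha$ is nonzero for every $\alpha>0$ whenever $\nabla u(x_0)\ne0$, and the bracket $(p-2)\,2\eta_\alpha^T\mathcal{H}u\,\eta_\alpha+2\Delta u$ tends to $2\mathcal{D}_pu(x_0)$ as $\alpha\to 0$. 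Your continuity wrap-up for critical points is legitimate because either $x_0$ lies in the closure of $\{\nabla u\ne0\}$, or $\nabla u$ vanishes on a neighbourhood and then $\mathcal{H}u=0$ there. This is arguably cleaner than the paper's case split, at the cost of introducing an auxiliary limit.
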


\begin{proof}

\begin{displaymath}
    \xymatrix{a)\ar@{<=>}[dr]&b)\ar@{<=>}[d]&c)\ar@{<=>}[dl]\\
					&d)&}
\end{displaymath}

The upward implications are immediate from the fundamental properties of \(\mathcal{D}_p\). As for the downward implications,
assume that \(u\) is \emph{not} dominative \(p\)-superharmonic. Then there is a point \(x_0\in\Omega\) so that \(\mathcal{D}_pu(x_0)>0\).
\bigskip

\noindent
\underline{a) \(\Rightarrow\) d):}
The implication is proved if we can find a constant \(a\in\mathbb{R}^n\) so that \(\Delta_p[u+a^Tx]>0\) at \(x_0\).

Let \(\xi_u = \xi_u(x_0)\) be a unit eigenvector of \(\mathcal{H}u(x_0)\) corresponding to the largest eigenvalue \(\lambda_u\) and let \(v(x) := a^Tx\) be the linear function with
\[a := \xi_u - \nabla u^T(x_0).\]
Then, at \(x_0\), \[\nabla u + \nabla v = \nabla u + a^T = \xi_u^T,\] so \(|\nabla(u+v)|=1\) and
\begin{align*}
\Delta_\infty[u+v] &= \od_\infty[u+v]\\
                   &= (\nabla u + \nabla v)(\mathcal{H}u + \mathcal{H}v)(\nabla u + \nabla v)^T\\
									 &= \xi_u^T(\mathcal{H}u + 0)\xi_u\\
									 &= \lambda_u\\
									 &= \mathcal{D}_\infty u(x_0) > 0
\end{align*}
if \(p=\infty\). Also, if \(p<\infty\),
\begin{align*}
\Delta_p[u + v] &= \od_p[u + v]\\
								&= (p-2)\od_\infty[u+v] + \Delta[u+v]\\
                &= (p-2)\lambda_u + \Delta u + 0\\
						    &= \mathcal{D}_p u(x_0) > 0.
\end{align*}

\bigskip

\noindent
\underline{b) \(\Rightarrow\) d):}
The implication is proved if we can find a \(y\in\mathbb{R}^n\) and a \(c\geq 0\) so that \[\Delta_p\big[u(x)+cw_{n,p}(x-y)\big]>0\] at \(x=x_0\).

Let \(\xi_u = \xi_u(x_0)\) be a unit eigenvector of \(\mathcal{H}u(x_0)\) corresponding to the largest eigenvalue \(\lambda_u\), and denote by \(q := \nabla u^T(x_0)\) the gradient of \(u\) at \(x_0\).
The idea is to consider a fundamental solution with centre far away from \(x_0\) in the proper direction, and then scale it in order to achieve a convenient cancellation in the sum of the gradients.

Introduce a (large) parameter \(s\) and let the centre of the scaled fundamental solution \(f_s(x) := c_sw_{n,p}(x-y_s)\) be at \(y_s := x_0 - q + s\xi_u\). Let \(z_s\) be the point
\[z_s := x_0 - y_s = q - s\xi_u.\]
Then
\[\nabla f_s(x_0) = c_s \nabla w_{n,p}(z_s) = c_s W_{n,p}'(|z_s|)\frac{z_s^T}{|z_s|},\qquad W_{n,p}(|x|) := w_{n,p}(x),\]
which equals \(-z_s^T = - (q - s\xi_u)^T\) if we choose the scale \(c_s\) to be
\[c_s := -\frac{|z_s|}{W_{n,p}'(|z_s|)} =
\begin{cases}
|q - s\xi_u|^\frac{n+p-2}{p-1},\qquad & 2\leq p<\infty,\\
|q - s\xi_u|, & p=\infty.
\end{cases}
\]
We may read the fraction \(\frac{n+p-2}{p-1}\) as \(1\) if \(p=\infty\).

We now get, at \(x_0\), \(\nabla u + \nabla f_s = q^T - (q - s\xi_u)^T = s\xi_u^T\) and
\begin{align*}
\od_p[u + f_s] &= (p-2)\frac{(\nabla u + \nabla f_s)(\mathcal{H}u + \mathcal{H}f_s)(\nabla u + \nabla f_s)^T}{|\nabla u + \nabla f_s|^2} + \Delta u+\Delta f_s\\
               &= (p-2)\xi_u^T(\mathcal{H}u + \mathcal{H}f_s)\xi_u + \Delta u+\Delta f_s\\
							 &= \mathcal{D}_pu + (p-2)\xi_u^T\mathcal{H}f_s\xi_u + \Delta f_s
\end{align*}
if \(p<\infty\) and
\[\od_\infty [u + f_s] = \mathcal{D}_\infty u + \xi_u^T\mathcal{H}f_s\xi_u\]
if \(p=\infty\).
Since \(\mathcal{D}_pu(x_0) > 0\) and does not depend on \(s\), we finish the proof by making the remaining term(s) arbitrarily close to zero.

The Hessian matrix of the fundamental solution is
\[\mathcal{H}w_{n,p}(x) = -\frac{W_{n,p}'(|x|)}{|x|}\left(\frac{n+p-2}{p-1}\frac{xx^T}{|x|^2} - I\right),\qquad \tfrac{n+\infty-2}{\infty-1} := 1,\]
so at \(z_s\) we get
\begin{align*}
\mathcal{H}f_s(x_0) &= c_s\mathcal{H}w_{n,p}(z_s)\\
                    &= -c_s\frac{W_{n,p}'(|z_s|)}{|z_s|}\left(\frac{n+p-2}{p-1}\frac{z_sz_s^T}{|z_s|^2} - I\right)\\
										&= \frac{n+p-2}{p-1}\frac{z_sz_s^T}{|z_s|^2} - I,\quad\text{and}\\
\Delta f_s(x_0) &= \frac{n+p-2}{p-1} - n.
\end{align*}
Thus, when \(p<\infty\),
\begin{align*}
(p-2)\xi_u^T\mathcal{H}f_s\xi_u + \Delta f_s
	&= (p-2)\left(\frac{n+p-2}{p-1}\frac{(z_s^T\xi_u)^2}{|z_s|^2} - 1\right) + \frac{n+p-2}{p-1} - n\\
	&= (p-2)\frac{n+p-2}{p-1}\left(\frac{(z_s^T\xi_u)^2}{|z_s|^2} - 1\right)\\
	&\to 0
\end{align*}
as \(s\to\infty\) since
\[\lim_{s\to\infty}\frac{(z_s^T\xi_u)^2}{|z_s|^2} = \lim_{s\to\infty}\frac{(q^T\xi_u - s)^2}{|q - s\xi_u|^2} = 1.\]
Likewise, when \(p=\infty\),
\[\xi_u^T\mathcal{H}f_s\xi_u = \frac{(z_s^T\xi_u)^2}{|z_s|^2} - 1 \to 0.\] 
To summarize. If \(\mathcal{D}_p u(x_0)>0\) and if \(s\) is large enough, then, for
\[z_s := \nabla u^T(x_0) - s\xi_u(x_0),\qquad f_s(x) := -\frac{|z_s|}{W_{n,p}'(|z_s|)}w_{n,p}(x - x_0 + z_s)\]
we have\footnote{We sometimes write \(\alpha_p\) as an abbreviation for \(p-2\) if \(p<\infty\) and 2 if \(p=\infty\).}
\[\Delta_p[u + f_s](x_0) = s^{\alpha_p}\od_p[u + f_s](x_0) > 0\]
and the sum \(u+f_s\) is not \(p\)-superharmonic.

\bigskip

\noindent
\underline{c) \(\Rightarrow\) d):}
Without loss of generality we may assume \(x_0\) to be the origin. i.e. \(0\in\Omega\) and \(\mathcal{D}_pu(0)>0\). We shall prove the implication by finding an isometry \(T\) and a point \(y_0\in\Omega\), equal or close to 0, so that \(T(y_0)\in\Omega\) and \(\Delta_p \big[u + u\circ T\big] > 0\) at \(y_0\).

Let \(y\in\mathbb{R}^n\), \(|y| = 1\) be a fixed direction in space defining the line
\[\ell := \{\alpha y\;|\; \alpha\in\mathbb{R}\}.\]
The \emph{projection} onto \(\ell\) is given by the 1-rank matrix
\[P := yy^T.\]
We have, as for every projection,
\[Px\in\ell\qquad\text{and}\qquad PP = P.\]
The \emph{reflection} about \(\ell\) is now given by \(Rx := Px - (x-Px)\). That is,
\[R = 2P - I.\]
A reflection satisfies
\[R|_\ell = \mathrm{id}\qquad\text{and}\qquad RR = I.\]
After carefully choosing \(y\), \(T(x) := Rx\) will be our isometry.

Define the superposition
\[V(x) := \frac{u(x) + u(Rx)}{2}.\]
The main idea of the proof is that, on \(\ell\), \(\nabla V\) will be pointing in the \(y\)-direction:

The chain rule gives \(\nabla V(x) = \tfrac{1}{2}\big(\nabla u(x) + \nabla u(Rx)R\big)\) and \(\mathcal{H}V(x) = \tfrac{1}{2}\big(\mathcal{H}u(x) + R\mathcal{H}u(Rx)R\big)\) and \(\Delta V(x) = \tfrac{1}{2}\big(\Delta u(x) + \Delta u(Rx)\big)\).
For \(x\in\ell\) we have \(Rx=x\), and
\begin{align*}
\nabla V &= \nabla u\frac{I + R}{2} = \nabla uP \in\ell,\\
\mathcal{H}V &= \frac{\mathcal{H}u + R\mathcal{H}uR}{2},\\
\Delta V &= \Delta u.
\end{align*}
This gives, when \(\nabla V = \nabla u P\neq 0\),
\begin{align*}
\od_pV\Big|_\ell &= (p-2)\frac{\nabla V\mathcal{H}V\nabla V^T}{|\nabla V|^2} + \Delta V\\
             &= (p-2)\frac{1}{|\nabla uP|^2}\nabla u P\frac{\mathcal{H}u + R\mathcal{H}uR}{2}P\nabla u^T + \Delta u\\
						 &= (p-2)\frac{\nabla u P\mathcal{H}uP\nabla u^T}{|\nabla uP|^2} + \Delta u\\
						 &= (p-2)y^T\mathcal{H}uy + \Delta u
\end{align*}
since \(RP = P\) and \(0\neq P\nabla u^T\) is parallel to \(y\).
Similarly
\[\od_\infty V\Big|_\ell = y^T\mathcal{H}uy.\]
Now choose \(y := \xi_u\) where \(\xi_u = \xi_u(0)\) is a unit eigenvector of \(\mathcal{H}u(0)\) corresponding to the largest eigenvalue \(\lambda_u\). The isometry \(T\) is then
\[T(x) := Rx = (2P-I)x = (2\xi_u\xi_u^T - I)x.\]

Since \(0\in\ell\), and unless \(\nabla u(0)\xi_u = 0\), it follows that
\begin{align*}
\frac{1}{2}\od_p [u + u\circ T]\Big|_{x=0} &= \od_pV(0)\\
                                  				 &= \mathcal{D}_p u(0) > 0
\end{align*}
and \(\Delta_p[u + u\circ T] = 2^{\alpha_p+1}|\nabla uP|^{\alpha_p}\od_p[u + u\circ T] > 0\) at \(x=0\) since \(\nabla u(0)P = \nabla u(0)\xi_u\xi_u^T \neq 0\).

If \(\nabla u(0)\xi_u = 0\) we complete the proof with a continuity argument:
Since \(\lambda_u(0) > 0\),\footnote{or else
\(\mathcal{D}_p u = \lambda_1 + \cdots + \lambda_{n-1} + (p-1)\lambda_u \leq 0\) at \(x=0\).}
 \(\mathcal{D}_p u(0)>0\) and \(\Omega\) is open, and since the Hessian is continuous, there must be a common \(\epsilon > 0\) so that
\begin{align*}
\xi_u^T\mathcal{H}u(t\xi_u)\xi_u &> 0,\qquad\text{and}\\
(p-2)\xi_u^T\mathcal{H}u(t\xi_u)\xi_u + \Delta u(t\xi_u)&> 0,\qquad\text{if \(p<\infty\), and}\\
T(t\xi_u) = t\xi_u &\in\Omega,\qquad\forall\; t\in[0,\epsilon].
\end{align*}
A Taylor expansion of the gradient about 0 in the \(\xi_u\)-direction then gives
\[\nabla u(\epsilon\xi_u) = \nabla u(0) + \epsilon\xi_u^T\mathcal{H}u(t_0\xi_u)\]
for some \(t_0\in [0,\epsilon]\). So
\[\nabla u(\epsilon\xi_u)\xi_u = 0 + \epsilon\xi_u^T\mathcal{H}u(t_0\xi_u)\xi_u > 0,\]
and again, since \(\epsilon\xi_u\in\ell\),
\begin{align*}
\frac{1}{2}\Delta_p [u + u\circ T]\Big|_{x=\epsilon\xi_u}
	&= |2\nabla u(\epsilon\xi_u)P|^{p-2}\Big((p-2)y^T\mathcal{H}u(\epsilon\xi_u)y + \Delta u(\epsilon\xi_u)\Big)\\
	&= \Big(2\nabla u(\epsilon\xi_u)\xi_u\Big)^{p-2}\Big((p-2)\xi_u^T\mathcal{H}u(\epsilon\xi_u)\xi_u + \Delta u(\epsilon\xi_u)\Big)\\
	&>0
\end{align*}
if \(p<\infty\) and
\[\frac{1}{2}\Delta_\infty [u + u\circ T]\Big|_{x=\epsilon\xi_u} = \Big(2\nabla u(\epsilon\xi_u)\xi_u\Big)^2\xi_u^T\mathcal{H}u(\epsilon\xi_u)\xi_u > 0\]
if \(p=\infty\).
Thus the sum \(u + u\circ T\) is not \(p\)-superharmonic in \(\Omega\cap T^{-1}(\Omega)\).
\end{proof}

We finish the proof of Theorem \ref{mainthm1} by showing the equivalence of (d) and (e). The nontrivial implication is (d)\(\Rightarrow\)(e). Namely that if \(2<p<\infty\) and \(u\in C^2(\Omega)\) is both \(p\)-harmonic and dominative \(p\)-superharmonic, then \(u\) is a cylindrical fundamental solution. Since the hypothesis and the domination implies
\[0 = \Delta_p u \leq |\nabla u|^{p-2}\mathcal{D}_p u \leq 0,\]
the claim follows from Proposition \ref{segre} below. It is partially the converse of the Cylindrical Equivalence.

\begin{proposition}\label{segre}
Let \(2<p<\infty\) and let \(u\in C^2(\Omega)\). If

\begin{equation}
\Delta_p u = 0 = \mathcal{D}_p u\qquad\text{in \(\Omega\)},
\label{double}
\end{equation}
then \(u\) is locally a cylindrical fundamental solution.
\end{proposition}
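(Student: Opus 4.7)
The plan is to extract geometric rigidity from the two equations, show that each level set of $u$ is an isoparametric hypersurface of $\mathbb{R}^n$, and then invoke the classical classification. I work on the open set where $\nabla u \neq 0$ and deal with critical points at the end.

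Dividing $\Delta_p u = 0$ by $|\nabla u|^{p-2}$ and subtracting from $\mathcal{D}_p u = 0$ yields, since $p-2>0$, the identity $\nabla u\,\mathcal{H}u\,\nabla u^T/|\nabla u|^2 = \lambda_u$. Hence $\nabla u^T/|\nabla u|$ realises the maximum of the Rayleigh quotient of $\mathcal{H}u$ and is an eigenvector for $\lambda_u$, so $\mathcal{H}u\,\nabla u^T = \lambda_u\,\nabla u^T$. It follows that $\nabla|\nabla u|^2 = 2\lambda_u\,\nabla u$ is parallel to $\nabla u$, so $|\nabla u|$ is constant on the level sets; write $|\nabla u| = g(u)$ with $g>0$. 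Introduce a signed distance function $\phi$ by $\phi'(u) = 1/g(u)$, so $|\nabla\phi|=1$, $u = U(\phi)$, $U'=g$, and $U'' = gg' = \lambda_u$. Decomposing $\Delta u = U''(\phi) + U'(\phi)\Delta\phi$ and comparing with $\Delta u = -(p-2)\lambda_u$ yields
\[\Delta\phi = -(p-1)\,g'\bigl(U(\phi)\bigr),\]
a function of $\phi$ alone; hence $\Delta\phi$ is constant on each level set $\Sigma_t := \{\phi = t\}$.

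The key step is to upgrade this single ``constant mean curvature'' identity to the full isoparametric condition. Differentiating the eikonal identity $|\nabla\phi|^2 = 1$ twice yields the matrix Riccati equation $\partial_s\mathcal{H}\phi = -(\mathcal{H}\phi)^2$ along the straight gradient line $\gamma(s)=y+sN(y)$ (straight because $\mathcal{H}\phi\,\nabla\phi^T = 0$); it integrates to
\[\mathcal{H}\phi\bigl(y+sN(y)\bigr) = \mathcal{H}\phi(y)\bigl(I + s\,\mathcal{H}\phi(y)\bigr)^{-1},\]
so each tangent eigenvalue of $\mathcal{H}\phi$ evolves as $\lambda_i(y,s) = \lambda_i(y,0)/\bigl(1+s\lambda_i(y,0)\bigr)$. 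The condition that $\Delta\phi(\gamma(s)) = \sum_i \lambda_i(y,s)$ depends on $s$ alone forces the $s$-Taylor coefficients at $0$ of the rational function $s \mapsto \sum_i \lambda_i(y,0)/(1+s\lambda_i(y,0))$ to be independent of $y$; these coefficients are, up to sign, the power sums $\sum_i \lambda_i(y,0)^k$ for $k\geq 1$. By Newton's identities the whole multiset $\{\lambda_1(y,0),\dots,\lambda_{n-1}(y,0)\}$ is therefore independent of $y\in\Sigma_0$, so $\Sigma_0$ is an isoparametric hypersurface of $\mathbb{R}^n$.

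I then invoke the classical classification of isoparametric hypersurfaces in Euclidean space (Somigliana, Levi-Civita, Segre): every smooth connected such hypersurface in $\mathbb{R}^n$ is locally an open piece of a hyperplane, a sphere, or a spherical cylinder $S^{k-1}\times\mathbb{R}^{n-k}$. Consequently $\phi$ is, up to sign and additive constant, $|Q^T(x-x_0)|$ for some $n\times k$ matrix $Q$ with orthonormal columns and some $x_0\in\mathbb{R}^n$, and $u = U\circ\phi$ is $k$-cylindrical in the sense of Definition \ref{cyldef}. For $k\geq 2$ the ``Moreover'' clause of the Cylindrical Equivalence in Proposition \ref{D_p prop} identifies $U$ with the profile of a $k$-cylindrical fundamental solution; for $k=1$ the surface is a hyperplane, the ODE reduces to $(p-1)U'' = 0$, and $u$ is affine, a degenerate $1$-cylindrical fundamental solution. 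On any open set where $\nabla u\equiv 0$ the function $u$ is constant (the trivial $C_1=0$ case), and isolated critical points are excluded by $C^2$-regularity since a $k$-cylindrical fundamental solution with $k\geq 2$ and $C_1>0$ has gradient blowing up near its axis. The principal obstacle is the upgrade from one scalar identity to full isoparametric rigidity---this is exactly what the matrix Riccati evolution along gradient lines accomplishes, making the classical classification applicable.
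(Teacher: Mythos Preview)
Your argument follows the same arc as the paper's: from $\Delta_p u=0=\mathcal{D}_pu$ and $p>2$ you extract that $\nabla u$ is an eigenvector of $\mathcal{H}u$ for the top eigenvalue, hence $|\nabla u|$ is constant on level sets, hence (via $\Delta u=-(p-2)\lambda_u$) so is $\Delta u$; the Segre classification of isoparametric hypersurfaces in $\mathbb{R}^n$ then forces $u$ to be cylindrical, and the ``Moreover'' clause of the Cylindrical Equivalence pins down the profile.

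The chief difference is that your Riccati step is superfluous. Once you have $|\nabla\phi|^2=1$ and $\Delta\phi=h(\phi)$, the function $\phi$ is already \emph{isoparametric} in exactly the sense the paper quotes, and Segre's theorem applies to its regular level sets directly. Your propagation of $\mathcal{H}\phi$ along normal lines and the Newton-identity argument are correct, but they amount to re-proving the classical equivalence between the ``isoparametric function'' and ``constant principal curvatures'' formulations---work the classification already absorbs. Where you are looser than the paper is regularity: writing $U''=gg'=\lambda_u$ presumes $g$ is differentiable, and differentiating the eikonal identity twice to obtain $\partial_s\mathcal{H}\phi=-(\mathcal{H}\phi)^2$ needs $\phi\in C^3$, neither of which is immediate from $u\in C^2$. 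The paper closes both gaps at once by noting that $F(u)$ with $F'=|\nabla u|^{p-2}$ is \emph{harmonic}, hence $u$ is real-analytic on $\{\nabla u\neq0\}$; you should either import that smoothness step before running the Riccati argument, or replace the ODE derivation by the tube formula for parallel $C^2$ hypersurfaces, which yields $\kappa_i(s)=\kappa_i(0)/(1+s\kappa_i(0))$ without third derivatives.
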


The proof relies on a  rather deep result in differential geometry. We refer to \cite{Cecil2015}, \cite{MR901710} and \cite{Thorbergsson2000963} for the details of the following exposition.

A nonconstant smooth function \(u\colon M\to\mathbb{R}\) on a Riemannian manifold \(M\) is called \emph{isoparametric} if there exists functions \(f\) and \(g\) so that
\begin{equation}
\frac{1}{2}|\nabla u|^2 = f(u)\qquad\text{and}\qquad \Delta u = g(u).
\label{isomap}
\end{equation}
A regular level-set of an isoparametric function is called an \emph{isoparametric hypersurface}.

The isoparametric hypersurfaces in the Euclidean case \(M\subseteq\mathbb{R}^n\) have been completely classified. Apparently, this was first done by Segre (see \cite{Thorbergsson2000963}) in 1938:

\begin{theorem}[Segre]
A connected isoparametric hypersurface in \(\mathbb{R}^n\) is, upon scaling and an Euclidean motion, an open part of one of the following hypersurfaces:
\begin{enumerate}
	\item a hyperplane \(\mathbb{R}^{n-1}\),
	\item a sphere \(S^{n-1}\),
	\item a generalized cylinder \(S^{k-1}\times\mathbb{R}^{n-k}\), \(k = 2,\dots,n-1\).
\end{enumerate}
\end{theorem}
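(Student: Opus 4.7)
The plan is to pass from the isoparametric function to the geometry of its regular level sets, first showing each regular level set has constant principal curvatures, then applying Cartan's identity to force at most two distinct principal curvatures, one of which must be zero, and finally classifying.

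On a regular level set $M_c = \{u = c\}$, the unit normal is $N = \nabla u/|\nabla u|$ and the shape operator is $S = -dN|_{TM_c}$. Differentiating $|\nabla u|^2 = f(u)$ gives $\mathcal{H}u\,\nabla u^T = \tfrac{1}{2}f'(u)\nabla u^T$, so $\nabla u^T$ is an eigenvector of the Hessian; combined with $\Delta u = g(u)$ this makes the mean curvature of $M_c$ a function of $c$ alone. To lift this to constancy of every individual principal curvature, I would consider the parallel family $\phi_t(x) = x + tN(x)$, which consists again of level sets of $u$: principal curvatures transform as $\kappa\mapsto\kappa/(1-t\kappa)$, and expressing the elementary symmetric functions of the principal curvatures of $\phi_t(M_c)$ as functions of $u$ alone yields polynomial identities in $t$ whose coefficients force every $\kappa_i$ to be constant on $M_c$ and to depend only on $c$.

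The algebraic core is Cartan's identity. If $M_c$ has distinct principal curvatures $\kappa_1 < \cdots < \kappa_g$ with multiplicities $m_i$ summing to $n-1$, the Gauss equation in flat $\mathbb{R}^n$ makes the sectional curvature on a plane spanned by principal vectors with distinct curvatures $\kappa_i, \kappa_j$ equal to $\kappa_i\kappa_j$, and the Codazzi equation $(\nabla_X S)Y = (\nabla_Y S)X$ applied to principal directions in distinct eigendistributions gives, after tracing over each distribution,
\[
\sum_{j\neq i} m_j\,\frac{\kappa_j}{\kappa_i - \kappa_j} = 0 \qquad \text{whenever } \kappa_i \neq 0.
\]
Applied at $\kappa_i$ the smallest positive principal curvature (if one exists): for every $j\neq i$ with $\kappa_j < 0$ the term is strictly negative (numerator $-$, denominator $+$), and for every $\kappa_j > \kappa_i$ the term is also strictly negative (numerator $+$, denominator $-$); so every nonzero contribution is negative, forcing all $\kappa_j$ with $j \neq i$ to vanish. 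Symmetrically for a largest negative curvature. Hence $g \leq 2$, and when $g = 2$ exactly one principal curvature is zero.

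I would conclude by case analysis on $g$. If $g = 1$, $M_c$ is totally umbilical: either $\kappa_1 = 0$ and $N$ is constant, so $M_c$ is an open part of a hyperplane; or $\kappa_1 \neq 0$ and $p \mapsto p + \kappa_1^{-1}N(p)$ is constant, identifying $M_c$ as an open part of a sphere of radius $|\kappa_1|^{-1}$. If $g = 2$ with nonzero $\kappa$ of multiplicity $m_1$ and $0$ of multiplicity $m_2 = n-1-m_1$, the two principal distributions are orthogonal and integrable with leaves round $m_1$-spheres of radius $1/|\kappa|$ and affine $m_2$-planes respectively, so integrating along the foliations identifies $M_c$ with an open part of $S^{k-1}\times\mathbb{R}^{n-k}$ for $k = m_1 + 1$. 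The main obstacle is the derivation of Cartan's identity: it is the only step where flatness of $\mathbb{R}^n$ enters essentially (via the Gauss equation), and the Codazzi equation must be traced carefully over each eigendistribution so that the multiplicity factors $m_j$ emerge correctly. A secondary technical point is promoting the local product structure in the $g = 2$ case to a global congruence with a generalised cylinder, which uses connectedness of the affine leaves and the common radius of the spherical leaves.
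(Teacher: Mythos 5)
You should first note that the paper does not prove this statement at all: Segre's classification is invoked as a known, ``rather deep'' result, with the proof deferred to the cited literature (Cecil--Ryan, Thorbergsson), and it is then used as a black box to establish Proposition \ref{segre}. So there is no internal argument to compare yours against; what you have sketched is precisely the classical proof found in those references, and as a roadmap it is correct. The eigenvector relation $\mathcal{H}u\,\nabla u^T=\tfrac12 f'(u)\nabla u^T$ together with $\Delta u=g(u)$ does give that each regular level set has mean curvature depending on the level only; the parallel family $\phi_t$, whose members are again level sets because $|\nabla u|$ is constant on each level set, upgrades this to constancy of every principal curvature --- one small imprecision here is that what you get directly is that the mean curvature $\sum_i m_i\kappa_i/(1-t\kappa_i)$ of each parallel hypersurface is constant along it, and expanding in $t$ yields constancy of the power sums $\sum_i m_i\kappa_i^k$, not of the elementary symmetric functions as such; Newton's identities make the difference immaterial, but the argument should be phrased through the mean curvatures of the parallel family, since the higher symmetric functions are not a priori functions of $u$. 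Cartan's identity with ambient curvature zero, $\sum_{j\neq i}m_j\kappa_j/(\kappa_i-\kappa_j)=0$ for $\kappa_i\neq 0$, then forces $g\le 2$ with one curvature equal to zero by exactly the sign argument you give, and the case analysis yields hyperplane, sphere, or spherical cylinder, with connectedness giving the ``open part of'' conclusion. The two points you flag yourself are indeed where the real work sits and are only asserted in your write-up: the derivation of Cartan's identity from the Gauss and Codazzi equations (the phrase ``after tracing over each distribution'' hides a genuine computation, including integrability of the curvature distributions), and the identification of the $g=2$ level sets with concentric cylinders, which is also what the paper needs afterwards when it asserts that the family $u^{-1}(c)$ is concentric so that $u$ is a function of the distance to a common axis. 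A complete version of your sketch would essentially reproduce the textbook proof; for the role this theorem plays in the paper, the citation is doing that work.
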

Moreover, the family of cylinders \(u^{-1}(c)\) is concentric. Thus \(u\) is a function of the distance to the common ``axis'' of the cylinders, the axis being a \((n-k)\)-dimensional affine subspace, \(k = 1,\dots,n\), in \(\mathbb{R}^n\). Call this subset \(\mathcal{A}_k\).

The axis \(\mathcal{A}_k\) is isomorphic to \(\mathbb{R}^{n-k}\),
\[\mathcal{A}_k \cong \mathbb{R}^{n-k} \cong \left\{
\begin{pmatrix}
	0 & 0\\ 0 & I_{n-k}
\end{pmatrix}x\; \middle|\; x\in\mathbb{R}^n\right\} =: \tilde{\mathcal{A}_k},\]
where, obviously,
\[\operatorname{dist}(x,\tilde{\mathcal{A}_k}) = \sqrt{x_1^2 + \cdots + x_k^2} = |(I_k\; 0)x|,\qquad 0\in\mathbb{R}^{k\times n-k}.\]
Translating and rotating back to \(\mathcal{A}_k\) via an isometry \(Q_n^T(x-x_0)\), \(Q_n\) \(n\times n\) orthogonal, we find that
\begin{align*}
u(x) &= U(\operatorname{dist}(x,\mathcal{A}_k))\\
     &= U\left(|(I_k\; 0)Q^T_n(x-x_0)|\right)\\
		 &= U\left(|Q^T_k(x-x_0)|\right)
\end{align*}
where \(Q_k\) is the \(n\times k\) matrix consisting of the first \(k\) columns of \(Q_n\).

Thus an isoparametric function is cylindrical.

\begin{proof}[Proof of Proposition \ref{segre}]
If \(u\) is affine, it can locally be written as a 1-cylindrical fundamental solution.
If \(u\) is not an affine function, let
 \(x_0\in\Omega\) be a point with a neighbourhood \(\Omega'\subseteq\Omega\) where \(u\) has connected level-sets and where \(\nabla u\neq 0\), \(\Hu\neq 0\).
By the discussion above and the last part of Proposition \ref{D_p prop} (3), it is sufficient to show that \(u\) is a smooth isoparametric function.

As \(\nabla u\neq 0\) and \(p>2\), our equation \eqref{double}
\[|\nabla u|^{p-2}\left((p-2)\hu\Hu\hu^T + \Delta u\right) = 0 = (p-2)\lambda_u + \Delta u\]
implies \(\hu\Hu\hu^T = \lambda_u\) and the gradient is therefore an eigenvector of the Hessian:
\[\Hu\nabla u^T = \lambda_u\nabla u^T.\]
Let \(\bfc\) be a differentiable curve in a level-set of \(u\). Then \(0 = \frac{\dd}{\dd t}u(\bfc(t)) = \nabla u\frac{\dd\bfc}{\dd t}\) and
\[\frac{\dd}{\dd t}\frac{1}{2}|\nabla u(\bfc(t))|^2 = \nabla u\Hu\frac{\dd\bfc}{\dd t} = \lambda_u\nabla u\frac{\dd\bfc}{\dd t} = 0.\]
Thus the length of the gradient is constant on the level-sets and can be written as a function only of \(u\). Say,
\begin{equation}
\frac{1}{2}|\nabla u(x)|^2 = f(u(x))>0.
\label{gradfun}
\end{equation}

We need to show that \(f\) is differentiable, because if so, differentiation of \eqref{gradfun} yields
\begin{equation}
\nabla u\Hu = f'(u)\nabla u
\label{lamisf}
\end{equation}
and \(\lambda_u = f'(u)\).
Using \eqref{double} once more, we then find that also \(\Delta u\) is a function of \(u\):
\begin{equation}
\Delta u =  - (p-2)\lambda_u =  - (p-2)f'(u) =: g(u).
\label{Lapfun}
\end{equation}

Fix \(x\in \Omega'\) and let \(\bfx(t)\) now be an integral curve of the gradient field starting from \(x\):
\[\frac{\dd\bfx}{\dd t}(t) = \nabla u^T(\bfx(t)),\qquad \bfx(0) = x.\]
Then define the function \(h\) as \(h(t) := u(\bfx(t))\). We see that \(h\) is \(C^2\) and
\[h'(t) = \nabla u(\bfx(t))\nabla u^T(\bfx(t)) = 2f(u(\bfx(t))) = 2f(h(t)) >0.\]
Thus \(h\) is strictly monotone and
\begin{align*}
\frac{1}{2}\frac{h''(0)}{h'(0)} &= \frac{1}{2}\lim_{t\to 0}\frac{h'(t) - h'(0)}{h(t) - h(0)}\\
																&= \lim_{t\to 0}\frac{f(h(t)) - f(h(0))}{h(t) - h(0)}\\
																&= f'(h(0)) = f'(u(x)).
\end{align*}
This is enough to conclude that \eqref{lamisf}, and thus \eqref{Lapfun}, is valid.

As for the regularity of \(u\), observe that
if \(F\) is an anti-derivative of \((2f)^\frac{p-2}{2}\) and \(\phi(x) := F(u(x))\), then \(\phi\) is \(C^2\) and
\[\nabla\phi = F'(u)\nabla u = |\nabla u|^{p-2}\nabla u.\]
That is, \(\phi\) is harmonic by \eqref{double}. It follows that \(\phi\) is real-analytic, and so is \(u\) since \(\nabla u = |\nabla\phi|^{-\frac{p-2}{p-1}}\nabla\phi\).
\end{proof}

This concludes the proof of Theorem \ref{mainthm1}.

It is worth noting that Proposition \ref{segre} is \emph{not} true for \(p=2\) and \(p=\infty\). The case \(p=2\) is obvious since \(\mathcal{D}_2 \equiv \Delta\) and every harmonic function satisfies \eqref{double}.

When \(p=\infty\), we believe a counter example is provided by a function
\[u(x) = \operatorname{dist}(x,\partial\Omega),\qquad \text{\(x\) near \(\partial\Omega\)},\]
where \(\Omega\) is a smooth, bounded and strictly convex, but not spherical, domain in, say, \(\mathbb{R}^2\).
Then \(u\) is neither affine nor a circular cone (i.e. not a cylindrical \(\infty\)-fundamental solution). But \(u\) solves the
\emph{eikonal equation}
\[|\nabla u| = 1\quad \text{in \(\Omega\)},\qquad u = 0\quad \text{on \(\partial\Omega\)}\]
so \(\nabla u\Hu = 0 = 0\cdot\nabla u\) and
\[\mathcal{D}_\infty u = \lambda_u = 0 = \nabla u\Hu\nabla u^T = \Delta_\infty u\]
since \(\lambda_u = 0\) is the larger eigenvalue as \(u\) surely has regions where it is locally concave.

\section{Viscosity solutions}\label{sect_vis}
The equation \(\D_pu = 0\) needs to be interpreted in the \emph{viscosity sense} (v.s.). We refer to \cite{MR1118699}, \cite{MR2084272} and \cite{MR3289084} for the general theory of viscosity solutions. For our purpose, only the basic notions of the concept are needed.

A PDE \(F(\nabla u,\mathcal{H}u) = 0\) is said to be \textbf{degenerate elliptic} if for any two symmetric matrices \(X\) and \(Y\) such that \(Y-X\) is positive semi-definite, i.e. \(X\leq Y\), we have
\[F(q,X) \leq F(q,Y)\]
for all \(q\in\mathbb{R}^n\).\footnote{The definition is often made with the opposite inequality and \(F\) replaced with \(-F\). The sign is not essential and we stick to the convention used in our reference \cite{MR3289084}.}

In our case
\[0 = \mathcal{D}_p u = F_p(\mathcal{H}u)\]
where
\[F_p(X) :=
\begin{cases}
 (p-2)\lambda_X + \tr X,\qquad &p<\infty,\\
 \lambda_X, &p=\infty.
\end{cases}\]
So if \(z^TXz\leq z^TYz\) for all \(z\in\mathbb{R}^n\), then
\begin{align*}
\lambda_X &= \xi_X^T X\xi_X\\
             &\leq \xi_X^TY\xi_X\\
						 &\leq \lambda_Y
\end{align*}
and \(F_\infty(X) \leq F_\infty(Y)\). Also, for any orthonormal set \(\{z_1,\dots,z_n\}\),
\[\tr X = \sum_{i=1}^n z_i^TX z_i \leq \sum_{i=1}^n z_i^TY z_i = \tr Y,\]
so \(F_p(X) \leq F_p(Y)\)
when \(0\leq p-2<\infty\). Thus the dominative \(p\)-Laplace equation \(\mathcal{D}_pu = 0\) is degenerate elliptic.

It is known that the \(p\)-Laplace Equation is degenerate elliptic for \(2\leq p\leq\infty\).

\subsection{Definitions and fundamental properties}

Consider a degenerate elliptic equation
\begin{equation}
F(\nabla u,\mathcal{H}u) = 0.
\label{pde}
\end{equation}

\begin{definition}\label{visdef}
We say that \(u\colon\Omega\to(-\infty,\infty]\) is a \textbf{viscosity supersolution} of the PDE \eqref{pde} if
\begin{enumerate}
	\item \(u\) is lower semi-continuous (l.s.c)
	\item \(u<\infty\) in a dense subset of \(\Omega\)
	\item If \(x_0\in\Omega\) and \(\phi\in C^2\) touches \(u\) from below at \(x_0\), i.e.
\[\phi(x_0) = u(x_0),\qquad \phi(x)\leq u(x)\qquad\text{for \(x\) near \(x_0\)},\]
we require that
\[F(\nabla\phi(x_0),\mathcal{H}\phi(x_0)) \leq 0.\]	
\end{enumerate}
\end{definition}

The viscosity \emph{sub}solutions \(u\colon\Omega\to[-\infty,\infty)\) are defined in a similar way: they are upper semicontinuous and the test functions touch from above. Finally, a function \(u\colon\Omega\to\mathbb{R}\) is a \textbf{viscosity solution} if it is both a viscosity supersolution and a viscosity subsolution. Necessarily, \(u\in C(\Omega)\).

We shall say that \(u\) is \emph{dominative \(p\)-superharmonic} if \(\D_pu\leq 0\) v.s.
The \(p\)-superharmonic functions were traditionally defined by the \emph{comparison principle} and weak integral formulations -- and not by viscosity. According to \cite{MR1871417}, however, the two concepts are equivalent and we may therefore define the \(p\)-superharmonic functions in terms of viscosity as well. The comparison principle then becomes a theorem:

\begin{theorem}[Comparison Principle]\label{psupdef}
Let \(2\leq p\leq\infty\). Assume that \(v\) is \(p\)-subharmonic and that \(u\) is \(p\)-superharmonic in \(\Omega\). Let \(D\subset\subset\Omega\). Then
\[v|_{\partial D}\leq u|_{\partial D}\qquad\Rightarrow\qquad v\leq u\qquad\text{in \(D\)}.\]
\end{theorem}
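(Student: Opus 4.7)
The plan is to exploit the equivalence between viscosity $p$-superharmonicity and classical potential-theoretic $p$-superharmonicity established in \cite{MR1871417}, which is cited just before the statement. Once that equivalence is invoked, for $2\leq p<\infty$ the comparison principle reduces to the classical weak comparison. That is proven by testing the weak inequalities
\[\int_D|\nabla v|^{p-2}\nabla v\cdot\nabla\varphi\dd x\leq 0,\qquad \int_D|\nabla u|^{p-2}\nabla u\cdot\nabla\varphi\dd x\geq 0\]
(valid for nonnegative $\varphi\in C_0^\infty(D)$) against $\varphi:=(v-u-\varepsilon)_+$, which is admissible because it has compact support in $D$ by the boundary hypothesis $v|_{\partial D}\leq u|_{\partial D}$. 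Subtracting the two inequalities and invoking the elementary monotonicity bound
\[\big\langle|\xi|^{p-2}\xi-|\eta|^{p-2}\eta,\ \xi-\eta\big\rangle\geq 0\qquad\forall\,\xi,\eta\in\mathbb{R}^n\]
forces $\nabla v=\nabla u$ a.e.\ on $\{v>u+\varepsilon\}$, so this set must be empty; letting $\varepsilon\downarrow 0$ gives $v\leq u$ in $D$.

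For $p=\infty$ the equation is non-variational and I would instead use Jensen's direct viscosity method. Suppose for contradiction that $M:=\max_{\overline D}(v-u)>0$; the boundary hypothesis forces this max to be attained at an interior point. Perturb $v$ to a strict subsolution $v_\delta:=v+\delta\phi$, where $\phi$ is smooth with $\phi|_{\partial D}\leq 0$ and strictly $\infty$-subharmonic in the viscosity sense (for instance $\phi(x)=e^{\gamma x_1}-C$ after a suitable translation of $D$). For small $\delta>0$ the function $v_\delta-u$ still attains a positive interior maximum, while now $\Delta_\infty v_\delta\geq c>0$ in the viscosity sense. Doubling the variables via
\[\Phi_\varepsilon(x,y):=v_\delta(x)-u(y)-\tfrac{1}{2\varepsilon}|x-y|^2\]
and applying Ishii's theorem of sums produces, at maximizers $(x_\varepsilon,y_\varepsilon)$, a common gradient $p_\varepsilon=(x_\varepsilon-y_\varepsilon)/\varepsilon$ and symmetric matrices $X\leq Y$ with $(p_\varepsilon,X)$ in the closure of the second-order superjet of $v_\delta$ at $x_\varepsilon$ and $(p_\varepsilon,Y)$ in the closure of the subjet of $u$ at $y_\varepsilon$. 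The degenerate ellipticity of $F_\infty$ established just above the statement, combined with the strict-subsolution inequality for $v_\delta$ and the supersolution inequality for $u$, then yields $c\leq 0$, a contradiction.

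The main obstacle is the $p=\infty$ case: producing a suitable strict-subsolution perturbation and controlling the passage to the limit $\varepsilon\downarrow 0$ when the common gradient $p_\varepsilon$ may degenerate is the technical heart of the matter, and is the reason the $\infty$-Laplacian needed a separate treatment historically. For $p<\infty$ the argument sketched above is essentially routine once \cite{MR1871417} has been invoked; in both regimes the heavy machinery (equivalence theorem on one side, theorem of sums on the other) is what ultimately does the work.
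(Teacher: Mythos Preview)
The paper does not actually prove this theorem at all: it is stated without proof, immediately after the remark that the equivalence of \cite{MR1871417} turns the (defining) comparison principle into a theorem. In other words, the paper simply imports the result from the literature. Your proposal therefore goes well beyond what the paper does.

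That said, your sketch has issues worth flagging. For \(2\leq p<\infty\) the energy argument with \(\varphi=(v-u-\varepsilon)_+\) presupposes that \(u\) and \(v\) are locally in \(W^{1,p}\), which is not guaranteed for general \(p\)-superharmonic functions (the fundamental solution near its pole is the standard obstruction). One needs either an approximation step or, more cleanly, to use the potential-theoretic definition directly: solve the Dirichlet problem on \(D\) with continuous boundary data and compare each of \(u,v\) with the resulting \(p\)-harmonic function. For \(p=\infty\) your perturbation step does not work as written: since \(\Delta_\infty\) is fully nonlinear, the fact that \(\psi-\delta\phi\) touches \(v\) from above only gives \(\Delta_\infty(\psi-\delta\phi)\geq 0\), from which one cannot conclude \(\Delta_\infty\psi\geq c>0\). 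Jensen's original proof and the later streamlined versions handle the gradient degeneracy by other means (comparison with cones, or a more careful analysis when \(p_\varepsilon\to 0\)); your proposal correctly identifies this as the hard point but does not resolve it.
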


Before we extend the fundamental properties of the dominative operator (Proposition \ref{D_p prop}) to the setting of viscosity, we establish that a dominative \(\infty\)-superharmonic function is the same as a concave function:
\begin{proposition}\label{conlem}
Let \(\Omega\subseteq\mathbb{R}^n\) be open and convex. Then
\[\mathcal{D}_\infty u\leq 0\;\text{v.s in \(\Omega\)}\qquad\iff\qquad \text{\(u\) is concave in \(\Omega\)}.\]
\end{proposition}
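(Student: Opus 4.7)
\textbf{Forward direction ($u$ concave $\Rightarrow \mathcal{D}_\infty u\leq 0$ in v.s.).} A real-valued concave function on an open convex set is continuous, hence l.s.c., and is finite everywhere, so the two preliminary conditions of Definition \ref{visdef} are automatic. Let $\phi\in C^2$ touch $u$ from below at $x_0$, fix a unit vector $z\in\mathbb{R}^n$, and set $h(t):=u(x_0+tz)$, $\psi(t):=\phi(x_0+tz)$. Then $h$ is concave and $\psi\in C^2$ with $\psi\leq h$ and $\psi(0)=h(0)$. Sandwiching the one-sided difference quotients of $h-\psi\geq 0$ at $0$ with the concavity inequality $D^-h(0)\geq D^+h(0)$ forces $h$ to be differentiable at $0$ with $h'(0)=\psi'(0)$; the chord-above-tangent property then yields $h(t)\leq \psi(0)+\psi'(0)t$. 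Substituting $\psi(t)\leq h(t)$ into the Taylor expansion $\psi(t)=\psi(0)+\psi'(0)t+\tfrac{1}{2}\psi''(0)t^2+o(t^2)$, dividing by $t^2$, and letting $t\to 0$ gives $z^T\mathcal{H}\phi(x_0)z\leq 0$. Since $z$ was arbitrary, $\mathcal{D}_\infty\phi(x_0)=\lambda_{\mathcal{H}\phi(x_0)}\leq 0$.

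\textbf{Reverse direction via inf-convolution.} Fix a convex $\Omega'\subset\subset\Omega$. For small $\epsilon>0$ define
\[u_\epsilon(x):=\inf_{y\in\Omega}\left(u(y)+\tfrac{|x-y|^2}{2\epsilon}\right),\qquad x\in\Omega'.\]
Expanding $|x-y|^2$ in $x$ shows that $u_\epsilon(x)-\tfrac{|x|^2}{2\epsilon}$ is an infimum of affine functions of $x$, hence concave; so $u_\epsilon$ is semi-concave and locally Lipschitz. For $\epsilon$ small the minimizer $y^*=y^*(x_0)$ lies in $\Omega$ for every $x_0\in\Omega'$. If $\phi\in C^2$ touches $u_\epsilon$ from below at $x_0$, then $\tilde\phi(y):=\phi(y+x_0-y^*)-\tfrac{|x_0-y^*|^2}{2\epsilon}$ touches $u$ from below at $y^*$ with identical Hessian, so the viscosity supersolution property transfers: $\mathcal{D}_\infty u_\epsilon\leq 0$ in v.s.\ on $\Omega'$.

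\textbf{Extracting a pointwise Hessian bound and passing to the limit.} By Alexandrov's theorem applied to the concave $u_\epsilon-\tfrac{|\cdot|^2}{2\epsilon}$, the function $u_\epsilon$ admits a second-order Taylor expansion at a.e.\ point, with some Hessian $A$. At such a point $x_0$ the quadratic
\[P_\delta(x):=u_\epsilon(x_0)+\nabla u_\epsilon(x_0)(x-x_0)+\tfrac{1}{2}(x-x_0)^T(A-\delta I)(x-x_0)\]
touches $u_\epsilon$ from below for every $\delta>0$, so the v.s.\ condition gives $\lambda_{\max}(A-\delta I)\leq 0$; letting $\delta\downarrow 0$ yields $A\leq 0$ a.e. Since $u_\epsilon$ is locally Lipschitz, restriction to any line gives an absolutely continuous function with second derivative $\leq 0$ a.e., hence concave; concavity along every line implies $u_\epsilon$ is concave on $\Omega'$. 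Finally $u_\epsilon\uparrow u$ pointwise as $\epsilon\downarrow 0$ by l.s.c.\ of $u$, and pointwise limits of concave functions are concave, so $u$ is concave on $\Omega'$; as $\Omega'\subset\subset\Omega$ was arbitrary, $u$ is concave on $\Omega$.

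\textbf{Main obstacle.} The hard part is the reverse implication. The viscosity definition is too weak to directly extract Hessian bounds on the possibly irregular $u$: tangent affine test functions only provide $\lambda_{\max}=0$, and a direct contradiction argument is foiled when $u$ happens to coincide with an affine function on a whole interval, leaving no ``gap'' to absorb a parabolic perturbation. The inf-convolution $u_\epsilon$ resolves this by producing a semi-concave approximation for which Alexandrov's theorem applies, and the $-\delta I$ shift in $P_\delta$ is precisely what manufactures \emph{strict} from-below quadratic tests. Ensuring the minimizer $y^*(x_0)$ remains interior to $\Omega$ (so that the test-function transfer is valid) is what forces the intermediate restriction to $\Omega'\subset\subset\Omega$.
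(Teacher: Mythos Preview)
The paper does not actually prove this proposition; it simply cites it as Proposition~4.1 in \cite{Lindqvist2000} and Theorem~2.2 in \cite{MR2817413}. So you are supplying strictly more than the paper does, and your overall strategy (inf-convolution plus Alexandrov) is a standard and reasonable route to the result.

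Your forward direction is clean. In the reverse direction, however, the passage from ``Alexandrov Hessian $A\leq 0$ a.e.'' to ``$u_\epsilon$ is concave'' is not justified by the argument you give. Two things go wrong. First, the set where $u_\epsilon$ fails to be twice differentiable has Lebesgue measure zero in $\mathbb{R}^n$, but its intersection with a \emph{fixed} line need not be null in one dimension; you cannot restrict an $n$-dimensional a.e.\ statement to every line. Second, even granting that, ``absolutely continuous with classical second derivative $\leq 0$ a.e.'' does \emph{not} imply concavity: take $g$ with $g'$ equal to the Cantor function, so that $g$ is $C^1$ (hence AC), $g''=0$ a.e., yet $g$ is convex and not concave. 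Local Lipschitz regularity alone is simply too weak here.

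The repair is to use the semi-concavity you already established rather than the Lipschitz bound. Since $v:=u_\epsilon-\tfrac{|\cdot|^2}{2\epsilon}$ is concave, its distributional Hessian is a nonpositive matrix-valued measure; hence the \emph{singular} part of $D^2u_\epsilon$ coincides with that of $D^2v$ and is $\leq 0$, while the absolutely continuous part has density $A\leq 0$ a.e.\ by your viscosity test with $P_\delta$. Thus $D^2u_\epsilon\leq 0$ in the sense of distributions, which does yield concavity (e.g.\ mollify and pass to the limit). A minor second point: your infimum defining $u_\epsilon$ runs over all of $\Omega$, so if $\Omega$ is unbounded and $u$ is unbounded below you may get $u_\epsilon\equiv-\infty$; restrict the infimum to some fixed $\Omega''$ with $\Omega'\subset\subset\Omega''\subset\subset\Omega$, where $u$ is bounded below by lower semicontinuity.
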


This is Proposition 4.1 in \cite{Lindqvist2000} or, alternatively, Theorem 2.2 in \cite{MR2817413} in disguise.
Note that continuity is automatically given by either direction: It is well known that concave functions are continuous in open domains. Also, if \(\mathcal{D}_\infty u\leq 0\) v.s. then \(u\) is \(\infty\)-superharmonic by Proposition \ref{D_p prop vis} below and is therefore continuous by Lemma 6.7 found in \cite{MR3467690}.

\begin{proposition}[Fundamental properties of \(\mathcal{D}_p\). Viscosity sense]\label{D_p prop vis}
The following hold for \(2\leq p\leq\infty\).
\begin{enumerate}[(1)]
	\item \textbf{Domination:}
	\[\text{\(u\) is dominative \(p\)-superharmonic}\qquad\Rightarrow\qquad \text{\(u\) is \(p\)-superharmonic.}\]
	\item \textbf{Sublinearity:} If \(\mathcal{D}_p u\leq 0\) and \(\mathcal{D}_p v\leq 0\) v.s. and \(u\) is radial, then \(\mathcal{D}_p[u+v]\leq 0\) v.s.
	\item \textbf{Radial equivalence:} If \(u\) is a radial \(p\)-superharmonic function, then \(\mathcal{D}_p u\leq 0\) v.s.
	\item \textbf{Nesting property:}
	\begin{itemize}
		\item If \(\mathcal{D}_p u\leq 0\) v.s., then \(\mathcal{D}_q u\leq 0\) v.s. for every \(2\leq q\leq p\).
		
		In particular, if \(u\) is locally concave, then \(\mathcal{D}_qu\leq 0\) v.s. for all \(2\leq q\leq\infty\).
		\item If \(\mathcal{D}_p u\geq 0\) v.s., then \(\mathcal{D}_q u\geq 0\) v.s. for every \(p\leq q\leq \infty\).
		
		In particular, if \(u\) is subharmonic then \(\mathcal{D}_qu\geq 0\) v.s. for all \(2\leq q\leq\infty\).
	\end{itemize}
	\item \textbf{Invariance:} If \(\mathcal{D}_p u \leq 0\) v.s. in \(\mathbb{R}^n\), then \(\mathcal{D}_p[u\circ T]\leq 0\) v.s. in \(\mathbb{R}^n\) for all isometries \(T\colon \mathbb{R}^n\to\mathbb{R}^n\).
\end{enumerate}
\end{proposition}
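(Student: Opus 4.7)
The strategy for all five claims is to reduce to the smooth versions of Proposition \ref{D_p prop} by the standard viscosity tools: testing from below with $C^2$ functions, and stability of viscosity inequalities under locally uniform limits.

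For \textbf{Domination} (1), \textbf{Nesting} (4) and \textbf{Invariance} (5) the argument is routine. If $\phi\in C^2$ touches $u$ from below at $x_0$, the viscosity hypothesis gives $\mathcal{D}_p\phi(x_0)\leq 0$, and the smooth property transfers this to the desired inequality on $\Delta_p\phi(x_0)$ (for (1)), on $\mathcal{D}_q\phi(x_0)$ (for (4)), or on $\mathcal{D}_p(\phi\circ T^{-1})(T(x_0))$ (for (5), after observing that $\phi\circ T^{-1}$ is itself a $C^2$ test function for $u$ at $T(x_0)$). Critical points of $\phi$ are handled trivially in (1) since $\Delta_p\phi=0$ there when $p<\infty$.

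The heart of the proposition is \textbf{Radial Equivalence} (3). I first treat the smooth case: if $u(x)=U(|x|)$ is a smooth radial $p$-superharmonic function, the comparison principle on concentric balls forces $U$ to be non-increasing, so $U'(r)\leq 0$. The eigenvalues of $\mathcal{H}u$ are $U'(r)/r$ (multiplicity $n-1$) and $U''(r)$ (multiplicity one), with $\lambda_u(x)=\max\{U''(r),U'(r)/r\}$. In the subcase $U''\geq U'/r$, Proposition \ref{D_p prop}(3) with $k=n$ identifies $\mathcal{D}_pu$ with the (nonpositive) normalized $p$-Laplacian of $u$. In the subcase $U''<U'/r$, a direct algebraic manipulation using the $p$-superharmonic inequality $(p-1)U''+(n-1)U'/r\leq 0$ together with $U'/r\leq 0$ yields $\mathcal{D}_pu = U'' + (n+p-3)U'/r\leq 0$, the key fact being that the coefficient $(n+p-3)-(n-1)/(p-1)$ is nonnegative for $p\geq 2$. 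For a general (non-smooth, possibly $+\infty$-valued) radial $p$-superharmonic $u$, I would approximate by smooth radial $p$-superharmonic functions $u_\varepsilon$ (for instance by a radial inf-convolution, which preserves radial symmetry and the viscosity supersolution property, followed by a radial mollification), and pass to the limit via stability.

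\textbf{Sublinearity} (2) is then handled by the ``subtract $u$'' trick at the level of the approximants. For $u_\varepsilon$ smooth radial $p$-superharmonic and $\phi\in C^2$ touching $u_\varepsilon+v$ from below at $x_0$, the function $\phi-u_\varepsilon\in C^2$ touches $v$ from below at $x_0$, so $\mathcal{D}_p(\phi-u_\varepsilon)(x_0)\leq 0$; smooth sublinearity, together with the smooth version of (3), then yields
\[\mathcal{D}_p\phi(x_0)\;\leq\;\mathcal{D}_p(\phi-u_\varepsilon)(x_0)+\mathcal{D}_pu_\varepsilon(x_0)\;\leq\;0,\]
so $\mathcal{D}_p[u_\varepsilon+v]\leq 0$ v.s. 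Stability as $\varepsilon\to 0$ gives $\mathcal{D}_p[u+v]\leq 0$ v.s. The main obstacle of the whole proposition is thus the approximation step underpinning both (2) and (3): producing smooth radial $p$-superharmonic mollifications of a merely l.s.c.\ radial $p$-superharmonic function that simultaneously preserve radial symmetry and the viscosity supersolution property. Once these approximants are constructed, Proposition \ref{D_p prop} reduces everything to routine verifications.
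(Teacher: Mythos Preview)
Your treatment of (1), (4) and (5) is correct and matches the paper's argument essentially verbatim.

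For (2) and (3), however, you have isolated the right difficulty and then not overcome it. The approximation scheme you sketch is a genuine gap. Inf-convolution of a radial $p$-superharmonic function does preserve radial symmetry and the viscosity supersolution property, but produces only semiconcave functions; the subsequent mollification step is the problem. Convolution with a mollifier does \emph{not} in general preserve viscosity supersolution inequalities for nonlinear operators such as $\Delta_p$, so ``followed by a radial mollification'' is precisely where the argument breaks. Your proof of (2) then inherits the same gap, since the ``subtract $u_\varepsilon$'' trick requires $u_\varepsilon\in C^2$. You correctly flag this as ``the main obstacle,'' but flagging it is not the same as removing it.

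The paper bypasses approximation entirely with a different idea: it shows (Lemma~\ref{testabove}) that a radial $p$-superharmonic function is touched \emph{from above} at every finite value by a scaled fundamental solution $h=C_1 w_{n,p}+C_2$ with $C_1\geq 0$. This is obtained by a comparison argument on annuli (Lemma~\ref{hlem}), exploiting that any two values $U(a),U(b)$ determine a unique scaled fundamental solution. Once such an $h$ exists, (3) is immediate: if $\phi$ touches $u$ from below at $x_0$ then $\phi\leq u\leq h$ with equality at $x_0$, so $\mathcal H\phi(x_0)\leq\mathcal Hh(x_0)$ and degenerate ellipticity gives $\mathcal D_p\phi(x_0)\leq\mathcal D_ph(x_0)=0$. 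For (2) the paper uses your ``subtract'' trick, but with $h$ in place of $u_\varepsilon$: since $u\leq h$ near $x_0$, if $\phi$ touches $u+v$ from below then $\psi:=\phi-h$ touches $v$ from below, and smooth sublinearity finishes. No limit, no stability, no mollification.

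Your smooth-case computation for (3) is correct and pleasant, but it becomes unnecessary once Lemma~\ref{testabove} is available.
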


The claims (1), (4) and (5) follow immediately from the corresponding properties in the smooth case (Proposition \ref{D_p prop}). The proofs of (2) and (3) are more difficult and are postponed until the survey of \emph{Radial \(p\)-superharmonic functions} in Section \ref{radpsup}.

\begin{proof}\textbf{Domination:}\\
Assume \(\mathcal{D}_p u \leq 0\) v.s. Let \(x_0\) be a point in the domain of \(u\), and assume \(\phi\) is a test function of \(u\) from below at \(x_0\). Then \(\mathcal{D}_p\phi(x_0)\leq 0\) and
\[\Delta_p\phi(x_0) \leq
\begin{cases}
|\nabla\phi(x_0)|^{p-2}\mathcal{D}_p\phi(x_0) \leq 0,\qquad &2\leq p<\infty\\
|\nabla\phi(x_0)|^2 \mathcal{D}_\infty\phi(x_0)\leq 0,\qquad &p = \infty,
\end{cases}
\]
by the smooth case \emph{Domination}. Hence \(\Delta_p u\leq 0\) v.s. and \(u\) is \(p\)-superharmonic.
\end{proof}

\begin{proof}\textbf{Nesting Property:}\\
Let \(2\leq q\leq p\) and suppose \(\mathcal{D}_p u\leq 0\) v.s. Let \(x_0\) be a point in the domain of \(u\), and assume \(\phi\) is a test function of \(u\) from below at \(x_0\). Then \(\mathcal{D}_p\phi(x_0) \leq 0\) and
\[\mathcal{D}_q\phi(x_0) \leq 0\]
by the smooth case \emph{Nesting Property} and \(\mathcal{D}_q u\leq 0\) v.s.

The additional claim follows from Proposition \ref{conlem}.

Let \(p\leq q\leq\infty\) and suppose \(\mathcal{D}_p u\geq 0\) v.s. Let \(x_0\) be a point in the domain of \(u\), and assume \(\phi\) is a test function of \(u\) from above at \(x_0\). Then \(\mathcal{D}_p\phi(x_0) \geq 0\) and
\[\mathcal{D}_q\phi(x_0) \geq 0\]
by the smooth case \emph{Nesting Property} and \(\mathcal{D}_q u\geq 0\) v.s.
\end{proof}

\begin{proof}\textbf{Invariance:}\\
Assume \(\mathcal{D}_p u \leq 0\) v.s. in \(\mathbb{R}^n\) and let \(T\colon \mathbb{R}^n\to\mathbb{R}^n\) be an isometry.
Let \(x_0\in\mathbb{R}^n\), and assume \(\phi\) is a test function for \(u\circ T\) from below at \(x_0\).
Then the function \(\hat{\phi} := \phi\circ T^{-1}\) is a test function for \(u\) from below at \(y_0 := T(x_0)\), since
\[\hat{\phi}(y_0) = \phi\circ T^{-1}(T(x_0)) = \phi(x_0) = u(y_0)\]
and, for \(y\) near \(y_0\), \(T^{-1}(y)\) is near \(x_0\) and
\[\hat{\phi}(y) = \phi(T^{-1}(y)) \leq (u\circ T)(T^{-1}(y)) = u(y).\]
Thus \(\mathcal{D}_p\hat{\phi}(y_0) \leq 0\) and
\begin{align*}
\mathcal{D}_p\phi(x_0) &= (\mathcal{D}_p[\hat{\phi}\circ T])(x_0)\\
             &= (\mathcal{D}_p\hat{\phi})(T(x_0)),\qquad\text{by the smooth case \emph{Invariance},}\\
						 &= \mathcal{D}_p\hat{\phi}(y_0)\\
						 &\leq 0
\end{align*}
and \(\mathcal{D}_p[u\circ T]\leq 0\) v.s. in \(\mathbb{R}^n\).
\end{proof}

\subsection{The superposition principle for radial p-superharmonic functions}

Proposition \ref{D_p prop vis} contains everything we need in order to show that radial \(p\)-superharmonic functions can be added:

\begin{proof}[Proof of Theorem \ref{mainthm2}]
Let \(2\leq p\leq\infty\) and let \(u_1,\dots,u_N\) be radial \(p\)-superharmonic functions in \(\mathbb{R}^n\). We must show that the sum
\[\sum_{i=1}^N u_i(x-y_i) + K(x),\qquad y_i\in\mathbb{R}^n,\]
is \(p\)-superharmonic in \(\mathbb{R}^n\) for any concave function \(K\).

Observe first that if \(\D_p u\leq 0\), \(\D_p v\leq 0\) v.s. where \(u\) is radial, then \(\D_p[v\circ T^{-1}]\leq 0\) v.s for every isometry \(T\) by the \emph{Invariance} (5).
By the \emph{Sublinearity} (2) it then follows that
\(\mathcal{D}_p[u + v\circ T^{-1}]\leq 0\) v.s. 
and
\[\mathcal{D}_p[u\circ T + v] = \mathcal{D}_p\big[(u + v\circ T^{-1})\circ T\big]\leq 0\; \text{ v.s.,}\]
again by (5).

From the \emph{Radial Equivalence (3)}, \(\mathcal{D}_pu_i\leq 0\) v.s for each \(i = 1,\dots,N\) and also \(\D_p K\leq 0\) v.s. by the \emph{Nesting Property} (4). Denoting the translations by \(T_i(x) := x-y_i\), and adding the functions \(u_i\circ T_i\) one by one, starting with \(K\), we obtain that
\[\mathcal{D}_p\left[\sum_{i=1}^N u_i\circ T_i + K\right]\leq 0\;\text{ v.s.}\]
We now use the \emph{Domination} (1) to conclude that the sum
\[\sum_{i=1}^N u_i\circ T_i + K\]
is \(p\)-superharmonic in \(\mathbb{R}^n\).
\end{proof}

\section{Radial p-superharmonic functions}\label{radpsup}

A radial \(p\)-superharmonic function \(u\colon\mathbb{R}^n\to (-\infty,\infty]\) is on the form \(u(x) = U(|x|)\) for some l.s.c. one-variable function \(U\colon [0,\infty)\to(-\infty,\infty]\). By comparing with constant functions (which are \(p\)-harmonic), it is clear that \(U\) must be decreasing (\(=\) non-increasing). Also, since the set \(\{x\colon u(x) = \infty\}\) has measure zero \cite{Lindqvist1986}, the origin is the only possible pole of \(u\). Therefore, \(u\) is \emph{bounded} in every annulus
\[A_a^b := B(0,b)\setminus\overline{B(0,a)},\qquad 0<a<b<\infty.\]
Equivalently, \(U\) is bounded on the interval \((a,b)\).

Let
\[w_{n,p}(x) = W_{n,p}(|x|),\qquad 2\leq p\leq\infty,\;n\geq 2,\]
denote the fundamental solution to the \(p\)-Laplace Equation in \(\mathbb{R}^n\), see \eqref{fundsol} in the Introduction. In this section, the important properties of the fundamental solution is that any scaled version, \(C_1w_{n,p} + C_2\), (\(C_1,C_2\in\mathbb{R}\)), is still a radial \(C^\infty\) \(p\)-harmonic function in \(\mathbb{R}^n\setminus\{0\}\). And when \(C_1\geq 0\), it is also dominative \(p\)-harmonic by Proposition \ref{D_p prop} part 3.

Furthermore, we shall frequently use the fact that \(W_{n,p} \colon [0,\infty)\to(-\infty,\infty]\) is \emph{strictly} decreasing. A simple calculation shows that \emph{a scaled fundamental solution is uniquely determined by its values at two different positive radii.}

Given a radial \(p\)-superharmonic function \(u(x) = U(|x|)\) in \(\mathbb{R}^n\) and two numbers \(0<a<b\), we define \(h_{ab}\) on \(\mathbb{R}^n\setminus\{0\}\) as the scaled fundamental solution \(h_{ab}(x) = H_{ab}(|x|)\) where
\begin{align}\label{hfundI}
H_{ab}(r) &:= C_{ab}\Big[W_{n,p}(r) - W_{n,p}(b)\Big] + U(b),\\
C_{ab} &:= \frac{U(a)-U(b)}{W_{n,p}(a)-W_{n,p}(b)}.\label{hfundII}
\end{align}
\begin{figure}[h]
\centering
\includegraphics{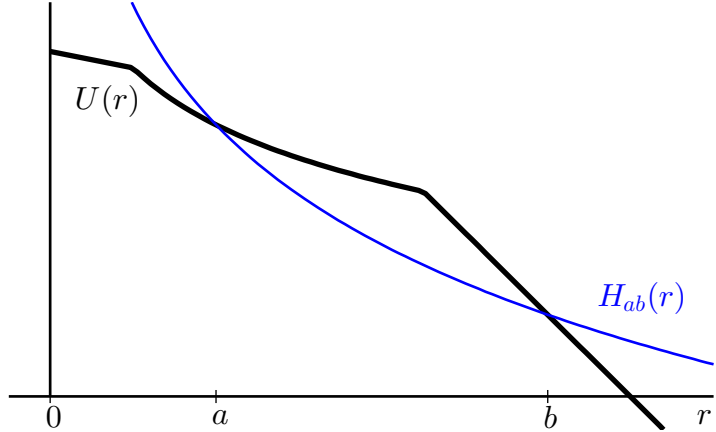}%
\caption{\textcolor{blue}{\(H_{ab}\)}\(\leq U\) on \([a,b]\), while \textcolor{blue}{\(H_{ab}\)}\(\geq U\) outside the interval.}%
\label{figradsup1}%
\end{figure}
The point of this is that \(h_{ab}\) is \(p\)-harmonic, smooth and it satisfies
\[H_{ab}(a) = U(a)\qquad\text{and}\qquad H_{ab}(b) = U(b).\]
Thus \(h_{ab}=u\) on the boundary of the annulus \(A_a^b\) and by the comparison principle we must have
\begin{equation}
h_{ab} \leq u\qquad\text{in } A_a^b.
\label{comp1}
\end{equation}
Equivalently,
\[H_{ab} \leq U\qquad\text{in } (a,b).\]

We now deduce other immediate properties of \(H_{ab}\) and the scaling constant \(C_{ab}\).

\begin{lemma}\label{hlem}
Let \(2\leq p\leq\infty\) and let \(u(x) = U(|x|)\) be a given radial \(p\)-superharmonic function in \(\mathbb{R}^n\).
For numbers \(0<a<b\) define the scaled fundamental solution \(h_{ab}(x) = H_{ab}(|x|)\) with scaling constant \(C_{ab}\) as in \eqref{hfundI} and \eqref{hfundII}.
\begin{enumerate}[(1)]
	\item We have the opposite inequality outside the annulus \(A_a^b\):
	\[H_{ab} \geq U\qquad\text{in } (0,a]\cup[b,\infty).\]
	\item 
	\(0 \leq C_{ab} \leq C_{bc} < \infty\)
	whenever \(0<a<b<c\).
	\item The mappings \(a\mapsto C_{ab}\) and \(c\mapsto C_{bc}\) are \emph{increasing.}
	\item The one sided limits
	\[C_b^- := \lim_{a\to b^-}C_{ab},\qquad C_b^+ := \lim_{c\to b^+}C_{bc}\]
	exist and
	\[0\leq C_b^-\leq C_b^+ < \infty.\]
\end{enumerate}
\end{lemma}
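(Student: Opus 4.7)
The plan is to reduce every claim to the comparison principle (Theorem \ref{psupdef}) applied to scaled fundamental solutions on annuli of the form \(A_s^t\). The key algebraic observation is that any two scaled fundamental solutions passing through the common point \((b, U(b))\) can both be written as \(C[W_{n,p}(\cdot) - W_{n,p}(b)] + U(b)\), so a pointwise inequality between two such solutions at any other radius \(r\) translates directly into an inequality between their scaling constants, with the direction determined by the sign of \(W_{n,p}(r) - W_{n,p}(b)\).

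I would prove the four items in the order (1), (2), (3), (4). The non-negativity \(C_{ab} \geq 0\) is immediate since \(U\) is decreasing and \(W_{n,p}\) strictly decreasing, and finiteness follows from \(U(r) < \infty\) for every \(r > 0\), itself a consequence of \(u\) being bounded on annuli away from the origin. For the inner half of (1), pick any \(r \in (0, a)\) and consider \(h_{rb}\) on the annulus \(A_r^b\); since \(h_{rb}\) is smooth and \(p\)-harmonic there and agrees with \(u\) on \(\partial A_r^b\), the comparison principle yields \(h_{rb} \leq u\) on \(A_r^b\). Evaluating at the intermediate radius \(a\) gives \(H_{rb}(a) \leq U(a) = H_{ab}(a)\), and since \(W_{n,p}(a) - W_{n,p}(b) > 0\), the algebraic observation converts this into \(C_{rb} \leq C_{ab}\). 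Evaluating the same two functions at \(r\), again with the positive factor \(W_{n,p}(r) - W_{n,p}(b)\), yields \(H_{ab}(r) \geq H_{rb}(r) = U(r)\). The outer case \(r \geq b\) is handled by the mirror argument using \(h_{ar}\) on \(A_a^r\), where one must watch the sign flip coming from \(W_{n,p}(b) - W_{n,p}(a) < 0\).

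With (1) in hand, (2) and (3) follow by the same rearrangement trick. For (2) with \(a < b < c\), apply (1) to \(h_{bc}\) at the exterior point \(a\): the inequality \(H_{bc}(a) \geq U(a) = H_{ab}(a)\) immediately gives \(C_{ab} \leq C_{bc}\). For (3a), for \(a_1 < a_2 < b\) apply (1) to \(h_{a_2 b}\) at \(a_1\) and deduce \(C_{a_1 b} \leq C_{a_2 b}\). For (3b), for \(b < c_1 < c_2\) apply (1) to \(h_{b c_1}\) at the point \(c_2\), being careful that \(W_{n,p}(c_2) - W_{n,p}(b) < 0\), and deduce \(C_{b c_1} \leq C_{b c_2}\).

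Part (4) is then pure monotonicity combined with the bounds from (2). The map \(a \mapsto C_{ab}\) is increasing on \((0, b)\) by (3a) and is bounded above by \(C_{b c_0}\) (for any fixed \(c_0 > b\)) by (2), so the limit \(C_b^- = \lim_{a \to b^-} C_{ab}\) exists and lies in \([0, \infty)\). Similarly, \(c \mapsto C_{bc}\) is increasing on \((b, \infty)\) by (3b) and bounded, so \(C_b^+ = \lim_{c \to b^+} C_{bc}\) exists in \([0, \infty)\). Letting \(a \to b^-\) and then \(c \to b^+\) in the inequality \(C_{ab} \leq C_{bc}\) of (2) gives \(C_b^- \leq C_b^+\). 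The main obstacle throughout is essentially the sign bookkeeping for the factor \(W_{n,p}(r) - W_{n,p}(b)\), which flips as \(r\) crosses \(b\) and thereby determines whether a functional \(\leq\) becomes a constant \(\leq\) or \(\geq\); once that is managed, every step is a direct application of the comparison principle.
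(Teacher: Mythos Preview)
Your proof is correct, and it takes a somewhat different route from the paper's. The paper proves (1) by contradiction: assuming \(H_{ab}(d) < U(d)\) for some \(d>b\), it brings in \(H_{ad}\), uses the comparison principle to sandwich values at \(a,b,d\), and then invokes the Intermediate Value Theorem to force \(H_{ad}\equiv H_{ab}\), a contradiction. For (2) and (3) the paper compares one-sided \emph{derivatives} of two scaled fundamental solutions at the shared endpoint and divides out by \(W_{n,p}'(b)<0\). By contrast, you run everything through a single algebraic device---two scaled fundamental solutions sharing a node at \((b,U(b))\) differ by \((C_1-C_2)[W_{n,p}(r)-W_{n,p}(b)]\)---so that each pointwise inequality produced by the comparison principle translates immediately into an inequality between constants, with the sign governed by whether \(r<b\) or \(r>b\). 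This makes your argument for (1) direct rather than by contradiction, and lets (2) and (3) drop out of (1) without passing to derivative limits. The paper's approach has a slightly more geometric flavour (two graphs crossing), while yours is more uniform and arguably more elementary; both rest on exactly the same applications of the comparison principle on annuli.
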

Observe that the existence of the limits (4) implies that \(U(r)\) has one sided derivatives at every \(r\neq 0\).
For example,
\[\frac{U(a) - U(b)}{a-b} = \frac{U(a) - U(b)}{W_{n,p}(a)-W_{n,p}(b)}\frac{W_{n,p}(a)-W_{n,p}(b)}{a-b}\]
which goes to \(C_b^- W_{n,p}'(b)\) as \(a\to b^-\).

\begin{figure}[h]
\centering
\includegraphics{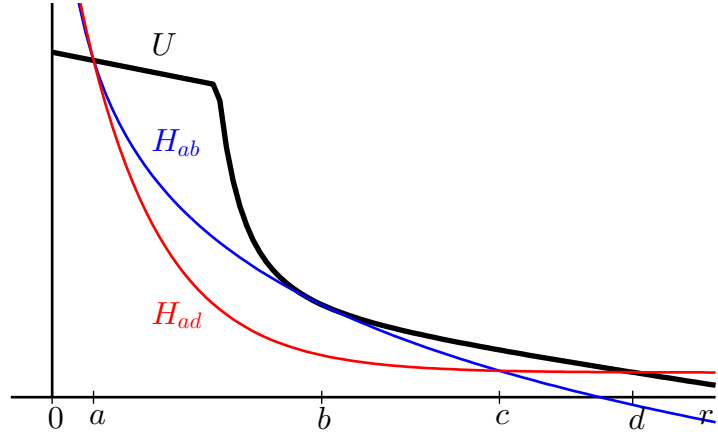}%
\caption{Impossible situation. If there is a \(d>b\) where \textcolor{blue}{\(H_{ab}(d)\)} \(< U(d)\), then \(\exists c\in[b,d)\) so that \textcolor{red}{\(H_{ad}(c)\)} \(=\)  \textcolor{blue}{\(H_{ab}(c)\)}. Thus \textcolor{red}{\(H_{ad}\)} \(\equiv\)  \textcolor{blue}{\(H_{ab}\)}.}%
\label{figradsup5}%
\end{figure}

\begin{proof}[Proof of (1)]
Suppose for the sake of contradiction that there is a number \(d>b\) so that \(H_{ab}(d) < U(d)\). See Figure \ref{figradsup5}.

The function \(h_{ad}(x) = H_{ad}(|x|)\) satisfies
\begin{align*}
H_{ad}(a) &= U(a) = H_{ab}(a),&&\text{by definition},\\
H_{ad}(d) &= U(d) > H_{ab}(d),&&\text{by assumption},\\
H_{ad}(b) &\leq U(b) = H_{ab}(b),&&\text{by the comparison principle.}
\end{align*}
By the Intermediate Value Theorem, there is an \(c\in[b,d)\) so that \(H_{ad}(c) = H_{ab}(c)\).
Since also \(H_{ad}(a) = H_{ab}(a)\), the two functions are identical. This is the contradiction
to the assumption \(H_{ad}(d) > H_{ab}(d)\). The proof for \(0<r<a\) is symmetric.
\end{proof}

\begin{figure}[h]
\centering
\includegraphics{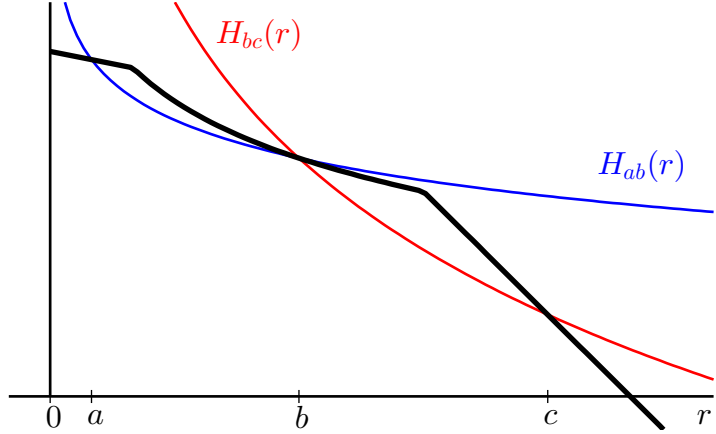}%
\caption{Proof of (2). \textcolor{red}{\(H_{bc}\)} \(\leq\) \textcolor{blue}{\(H_{ab}\)} on \((b,c)\) so \textcolor{red}{\(H_{bc}'(b)\)} \(\leq\) \textcolor{blue}{\(H_{ab}'(b)\)}.}%
\label{figradsup3}%
\end{figure}

\begin{proof}[Proof of (2)]
\[0\leq C_{ab} := \frac{U(a)-U(b)}{W_{n,p}(a)-W_{n,p}(b)}<\infty\]
for all \(0<a<b\) since \(U\) is decreasing and \(W_{n,p}\) is strictly decreasing. Moreover,
\[H_{bc}(b+\epsilon) \leq U(b+\epsilon) \leq H_{ab}(b+\epsilon)\]
whenever \(b+\epsilon\) is between \(b\) and \(c\), see Figure \ref{figradsup3}. The first inequality follows from the comparison principle, and the second inequality follows from (1). Therefore
\begin{align*}
C_{ab}W_{n,p}'(b) &= H_{ab}'(b)\\
                  &= \lim_{\epsilon\to0^+}\frac{H_{ab}(b+\epsilon) - H_{ab}(b)}{\epsilon}\\
									&\geq \lim_{\epsilon\to0^+}\frac{H_{bc}(b+\epsilon) - H_{bc}(b)}{\epsilon}\\
									&= H_{bc}'(b)\\
									&= C_{bc}W_{n,p}'(b)
\end{align*}
and \(C_{ab} \leq C_{bc}\) since \(W_{n,p}'(b)<0\).
\end{proof}

\begin{figure}[h]
\centering
\includegraphics{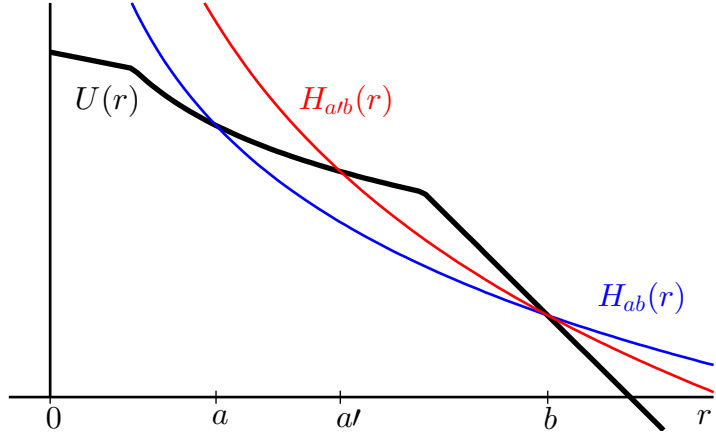}%
\caption{Proof of (3). \textcolor{red}{\(H_{a'b}\)} \(\geq\) \textcolor{blue}{\(H_{ab}\)} on \((0,b)\) so \textcolor{red}{\(H_{a'b}'(b)\)} \(\leq\) \textcolor{blue}{\(H_{ab}'(b)\)}.}%
\label{figradsup2}%
\end{figure}

\begin{proof}[Proof of (3)]
Let \(0<a<a'<b\). By the comparison principle,
\[H_{a'b}(a') = U(a')\geq H_{ab}(a').\]
Since \(H_{a'b}(b) = H_{ab}(b)\), the two functions are either identical or \(H_{a'b}>H_{ab}\) on \((0,b)\). It follows that
\begin{align*}
C_{ab}W_{n,p}'(b) &= H_{ab}'(b)\\
                  &= \lim_{\epsilon\to0^+}\frac{H_{ab}(b-\epsilon) - H_{ab}(b)}{-\epsilon}\\
									&\geq \lim_{\epsilon\to0^+}\frac{H_{a'b}(b-\epsilon) - H_{a'b}(b)}{-\epsilon}\\
									&= H_{a'b}'(b)\\
									&= C_{a'b}W_{n,p}'(b)
\end{align*}
and \(C_{ab} \leq C_{a'b}\) since \(W_{n,p}'(b)<0\). The proof for \(b<c'<c\) is symmetric.
\end{proof}

\begin{proof}[Proof of (4)]
The claims in (4) are immediate consequences of (2) and (3).
\end{proof}

We are now able to reveal a crucial fact about radial \(p\)-superharmonic functions: They have a smooth \(p\)-harmonic test function touching from \emph{above} at \emph{every} finite value.\footnote{This desirable property of having a test function from above is, unfortunately, not possessed by every \(p\)-superharmonic function, not even by the dominative ones. Although not proven here, the series
\[V(x) := \sum_{i=1}^\infty\frac{c_i}{|x-\xi/i|^\frac{n-p}{p-1}},\qquad c_i = \frac{1}{i^\frac{n-p}{p-1}2^i},\;|\xi| = 1,\; 2\leq p<n,\]
gives a counter example at \(x=0\). The unboundedness is \emph{not} the issue since the same could have been said about the function \(\min\{V,2\}\). On the other hand it is interesting to note that, in the case \(p=\infty\), \emph{every} dominative supersolution is touched from above by planes, i.e. 1-cylindrical fundamental solutions.}

\begin{lemma}\label{testabove}
\(2\leq p\leq\infty\). Let \(u\) be a radial \(p\)-superharmonic function and let \(x_0\in\mathbb{R}^n\). If \(u(x_0)<\infty\), then there exists a fundamental solution,
\[h(x) := C_1 w_{n,p}(x) + C_2, \qquad C_1 \geq 0,\]
touching \(u\) from above at \(x_0\):
\[h(x_0) = u(x_0)\qquad \text{and}\qquad h(x) \geq u(x)\;\text{ near \(x_0\)}.\]
\end{lemma}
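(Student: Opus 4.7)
The plan is to obtain $h$ as a one-sided limit, as $a \to b^-$ with $b := |x_0|$, of the scaled fundamental solutions $h_{ab}$ already constructed in this section.

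The trivial case is $b = 0$: then $x_0 = 0$ and $U(0) = u(x_0) < \infty$, and since $U$ is non-increasing the constant function $h(x) \equiv U(0)$ (that is, $C_1 = 0$ and $C_2 = U(0)$ in the required form) already dominates $u$ on all of $\mathbb{R}^n$ and agrees with it at the origin. So I shall concentrate on $b > 0$.

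For $b > 0$, I would set $C_1 := C_b^-$ from Lemma~\ref{hlem}(4) and $C_2 := U(b) - C_b^- W_{n,p}(b)$, giving the candidate
\[
h(x) = H^-(|x|), \qquad H^-(r) := C_b^-\bigl[W_{n,p}(r) - W_{n,p}(b)\bigr] + U(b).
\]
The matching condition $h(x_0) = U(b) = u(x_0)$ is automatic, and $C_1 \geq 0$ is Lemma~\ref{hlem}(2). The essential step is the pointwise inequality $H^-(r) \geq U(r)$ for every $r > 0$. For fixed $r > b$, each $a \in (0,b)$ puts $r$ in the region $[b,\infty)$ on which Lemma~\ref{hlem}(1) yields $H_{ab}(r) \geq U(r)$; passing to the limit $a \to b^-$ gives $H^-(r) \geq U(r)$. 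For fixed $r \in (0,b)$, I would instead restrict to $a \in (r,b)$, so that $r \in (0,a]$, apply Lemma~\ref{hlem}(1) again to obtain $H_{ab}(r) \geq U(r)$, and let $a \to b^-$ (keeping $a > r$). Equality at $r = b$ holds by construction.

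The step I expect to be the main obstacle is precisely this two-sided verification: the comparison principle only furnishes the reverse bound $h_{ab} \leq u$ inside the annulus $A_a^b$, so the inequality $H^- \geq U$ to the left of $b$ has to be extracted indirectly, by combining Lemma~\ref{hlem}(1) with the fact that the inner radius of the annulus degenerates in the limit $a \to b^-$. Once the global pointwise inequality is established, $h \geq u$ on all of $\mathbb{R}^n \setminus \{0\}$, which is certainly a neighbourhood of $x_0 \neq 0$.
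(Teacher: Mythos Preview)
Your proposal is correct and is essentially the paper's argument: the same candidate $h$ (the paper allows any $C_b\in[C_b^-,C_b^+]$, and your choice $C_b^-$ is one such), the same treatment of $b=0$, and the same ingredients from Lemma~\ref{hlem}. The only cosmetic difference is in how you verify $H^-\geq U$: you pass to the limit $a\to b^-$ in the inequality $H_{ab}(r)\geq U(r)$ from Lemma~\ref{hlem}(1), whereas the paper compares coefficients directly, using $C_{ab}\leq C_b^-$ (Lemma~\ref{hlem}(3),(4)) together with the sign of $W_{n,p}(a)-W_{n,p}(b)$ on the left, and $C_b^-\leq C_b^+\leq C_{bc}$ on the right---but these are two phrasings of the same monotonicity fact.
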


\begin{proof}
If \(x_0=0\) and \(u\) is bounded at the origin, the constant function \(h(x) \equiv u(0)\) will do.

Write \(u(x) = U(|x|)\) and \(b := |x_0| > 0\). Let \(C_b\in [C_b^-, C_b^+]\) where the end-points of the, possibly singleton but non-empty, interval are defined in Lemma \ref{hlem}, (4). We claim that the scaled fundamental solution \(h(x) = H(|x|)\) given by
\[H(r) := C_b\Big[W_{n,p}(r) - W_{n,p}(b)\Big] + U(b)\]
touches \(u\) from above at \(x_0\).

Obviously, \(H(b) = U(b)\) and if \(0<a<b\), then \(C_{ab}\leq C_b\) by Lemma \ref{hlem} (3), (4) and
\begin{align*}
H(a) &= C_b\Big[W_{n,p}(a) - W_{n,p}(b)\Big] + U(b)\\
     &\geq C_{ab}\Big[W_{n,p}(a) - W_{n,p}(b)\Big] + U(b),\qquad W_{n,p}(a) - W_{n,p}(b) > 0,\\
		 &= H_{ab}(a) = U(a).
\end{align*}
If \(b<c\), then similarly \(C_b\leq C_{bc}\) and
\begin{align*}
H(c) &= C_b\Big[W_{n,p}(c) - W_{n,p}(b)\Big] + U(b)\\
     &\geq C_{bc}\Big[W_{n,p}(c) - W_{n,p}(b)\Big] + U(b),\qquad W_{n,p}(c) - W_{n,p}(b) < 0,\\
		 &= U(c).
\end{align*}
\end{proof}

Observe that \(H\) is uniquely determined if and only if \(U\) is differentiable at \(r=b\). That is, if and only if \(C_b^+ = C_b^-\).

With these new tools, we restate and prove the \emph{Radial Equivalence} (3) and the \emph{Sublinearity} (2) of Proposition \ref{D_p prop vis}.

\begin{proposition}[Radial Equivalence]\label{radeq vis}
Let \(2\leq p\leq\infty\). If \(u\) is a radial \(p\)-superharmonic function in \(\mathbb{R}^n\), then
\[\mathcal{D}_p u \leq 0\qquad\text{in }\mathbb{R}^n\]
in the viscosity sense.
\end{proposition}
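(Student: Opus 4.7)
The plan is to leverage Lemma \ref{testabove} together with the degenerate ellipticity of $\mathcal{D}_p$ established earlier in this section. Fix $x_0\in\mathbb{R}^n$ and a $C^2$ test function $\phi$ touching $u$ from below at $x_0$. If $u(x_0)=\infty$ there is nothing to check, so assume $u(x_0)<\infty$. By Lemma \ref{testabove} there is a scaled fundamental solution
\[
h(x) = C_1 w_{n,p}(x) + C_2,\qquad C_1\geq 0,
\]
centered at the origin — the same center as $u$ — which touches $u$ from \emph{above} at $x_0$.

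The core of the proof is the resulting sandwich. Near $x_0$ we have $\phi(x)\leq u(x)\leq h(x)$ with equality throughout at $x_0$, so $h-\phi$ is nonnegative on a neighborhood of $x_0$ and vanishes at $x_0$. Hence $x_0$ is a local minimum of the $C^2$ function $h-\phi$ (this works just as well at $x_0=0$, where Lemma \ref{testabove} delivers the constant $h\equiv u(0)$). First- and second-order conditions at this minimum yield
\[
\nabla\phi(x_0) = \nabla h(x_0),\qquad \mathcal{H}\phi(x_0) \leq \mathcal{H}h(x_0),
\]
the second inequality being in the sense of symmetric matrices.

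Now I apply the degenerate ellipticity of $\mathcal{D}_p$ verified just before Definition \ref{visdef}: the Hessian inequality gives $\mathcal{D}_p\phi(x_0)\leq \mathcal{D}_p h(x_0)$. It remains to see that $\mathcal{D}_p h(x_0)\leq 0$. For $x_0\neq 0$ the function $h$ is a smooth $n$-cylindrical fundamental solution, and the Cylindrical Equivalence of Proposition \ref{D_p prop}(3) gives $\mathcal{D}_p h(x_0)=0$. For $x_0=0$ we have $h\equiv u(0)$, hence $\mathcal{H}h(0)=0$ and again $\mathcal{D}_p h(0)=0$. Either way $\mathcal{D}_p\phi(x_0)\leq 0$, which is exactly the viscosity condition.

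The argument itself is short because the real work has been compressed into Lemma \ref{testabove}; the only conceptual hurdle is matching the centers. Since $u$ is radial about $0$, the touching fundamental solution supplied by Lemma \ref{testabove} is also centered at $0$, and this alignment is precisely what allows the pointwise inequality $u\leq h$ to hold in a whole neighborhood of $x_0$ rather than only at $x_0$. That neighborhood is what turns the first-order touching $\phi=u=h$ into the matrix inequality needed to invoke ellipticity.
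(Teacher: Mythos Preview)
Your proof is correct and follows essentially the same route as the paper: invoke Lemma \ref{testabove} to sandwich $\phi\leq u\leq h$ at $x_0$, deduce the Hessian ordering $\mathcal{H}\phi(x_0)\leq\mathcal{H}h(x_0)$, and then use degenerate ellipticity together with the Cylindrical Equivalence to conclude $\mathcal{D}_p\phi(x_0)\leq\mathcal{D}_p h(x_0)=0$. Your version is slightly more explicit in separating the cases $x_0\neq 0$ and $x_0=0$, but the argument is the same.
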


\begin{proof}
Let \(x_0\in\mathbb{R}^n\) and assume \(u\) has a test function \(\phi\) from below at \(x_0\). We need to show that \(\mathcal{D}_p\phi(x_0)\leq 0\). Obviously, \(u(x_0)<\infty\). By Lemma \ref{testabove} there is a scaled fundamental solution \(h\) touching \(u\) from above at \(x_0\).
Thus, for \(x\) close to \(x_0\),
\[\phi(x)\leq u(x)\leq h(x),\qquad \phi(x_0) = u(x_0) = h(x_0)\]
which implies the Hessian matrix inequality \(\mathcal{H}\phi(x_0)\leq \mathcal{H}h(x_0)\).
It follows that
\[\mathcal{D}_p\phi(x_0) \leq \mathcal{D}_p h(x_0) = 0\]
from the fact that \(\mathcal{D}_p\) is degenerate elliptic and from the smooth case \emph{Cylindrical Equivalence}.
\end{proof}

\begin{proposition}[Sublinearity]\label{sublin}
\(2\leq p\leq\infty\). Assume \(\mathcal{D}_p u\leq 0\) and \(\mathcal{D}_p v\leq 0\) in the viscosity sense in \(\mathbb{R}^n\) where \(u\) is radial. Then
\[\mathcal{D}_p[u + v] \leq 0\]
in the viscosity sense in \(\mathbb{R}^n\).
\end{proposition}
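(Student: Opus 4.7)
The plan is to exploit the distinguishing feature of radial $p$-superharmonic functions, namely Lemma \ref{testabove}: at every point where $u$ is finite there is a smooth scaled fundamental solution touching $u$ \emph{from above}. This provides a ``barrier'' that can be subtracted off a test function for the sum, converting it into a test function for $v$ alone.

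Concretely, let $\phi \in C^2$ touch $u+v$ from below at some $x_0 \in \mathbb{R}^n$. Since $\phi(x_0) \in \mathbb{R}$ equals $u(x_0) + v(x_0)$ and neither function attains $-\infty$, both $u(x_0)$ and $v(x_0)$ are finite. The \emph{Domination} (1) (already proven) tells us $u$ is $p$-superharmonic, so I would invoke Lemma \ref{testabove} to produce a scaled fundamental solution $h(x) = C_1 w_{n,p}(x-x_0') + C_2$ with $h(x_0) = u(x_0)$ and $h \geq u$ in a neighbourhood of $x_0$. Either $h$ is constant (the case $x_0=0$ handled in the lemma) or the translated fundamental solution is smooth at $x_0$; in both cases the smooth-case \emph{Cylindrical Equivalence} (3) yields $\mathcal{D}_p h(x_0) = 0$.

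I would then define $\psi := \phi - h$, which is $C^2$ near $x_0$. Combining the two-sided inequality $\phi \leq u+v$ and $h \geq u$ gives
$$\psi(x) = \phi(x) - h(x) \leq (u+v)(x) - u(x) = v(x)$$
for $x$ near $x_0$, with equality at $x_0$. Hence $\psi$ is a legitimate $C^2$ test function for $v$ from below at $x_0$, and the hypothesis $\mathcal{D}_p v \leq 0$ v.s. supplies $\mathcal{D}_p \psi(x_0) \leq 0$. Writing $\phi = \psi + h$ and applying the smooth-case \emph{Sublinearity} (Proposition \ref{D_p prop} (2)) then closes the argument:
$$\mathcal{D}_p \phi(x_0) \leq \mathcal{D}_p \psi(x_0) + \mathcal{D}_p h(x_0) \leq 0.$$

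The main obstacle — and the reason the radiality hypothesis on $u$ is indispensable — is precisely the production of the smooth upper barrier $h$. For a general dominative $p$-superharmonic function no such $h$ need exist (as the footnote to Lemma \ref{testabove} emphasises), and the standard viscosity trick of doubling variables would not recover sublinearity here because $\mathcal{D}_p$ involves the largest eigenvalue and is not concave in the Hessian. Everything else — the algebra of test functions, the fact $\mathcal{D}_p h(x_0)=0$, and the invocation of the smooth sublinearity — is routine once $h$ is in hand.
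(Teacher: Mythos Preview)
Your proposal is correct and is essentially identical to the paper's proof: use Lemma \ref{testabove} to produce a scaled fundamental solution \(h\) touching \(u\) from above at \(x_0\), set \(\psi=\phi-h\) as a test function for \(v\), and close with the smooth-case Sublinearity and \(\mathcal{D}_p h(x_0)=0\). The only cosmetic slip is the notation \(h(x)=C_1 w_{n,p}(x-x_0')+C_2\); in Lemma \ref{testabove} the barrier is centred at the origin (the centre of radiality of \(u\)), i.e.\ \(h(x)=C_1 w_{n,p}(x)+C_2\), but this does not affect the argument.
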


\begin{proof}
Let \(x_0\in\mathbb{R}^n\) and assume \(\phi\) is a test function to \(u+v\) from below at \(x_0\).
Clearly, \(u(x_0)<\infty\) since otherwise \(u+v=\infty\) at \(x_0\) and there would be no test function there.
Also, \(u\) is \(p\)-superharmonic by the \emph{Domination} (1) of Proposition \ref{D_p prop vis}. Hence, by Lemma \ref{testabove}, there exists a scaled fundamental solution \(h\) touching \(u\) from above at \(x_0\):
\[h(x_0) = u(x_0),\qquad u(x) \leq h(x)\qquad\text{near \(x_0\)}.\]
Again, \(\mathcal{D}_p h = 0\) by the smooth case \emph{Cylindrical Equivalence}.

Define \(\psi(x) := \phi(x) - h(x)\). Then \(\psi\) is \(C^2\) and
\[\psi(x_0) = u(x_0) + v(x_0) - u(x_0) = v(x_0)\]
and
\[\psi(x) \leq u(x) + v(x) - u(x) = v(x)\]
near \(x_0\),
so \(\psi\) is a test function for \(v\) from below at \(x_0\). This means that \(\mathcal{D}_p\psi(x_0) \leq 0\), and it follows that
\[\mathcal{D}_p\phi = \mathcal{D}_p[\psi + h] \leq \mathcal{D}_p\psi + \mathcal{D}_p h \leq 0\]
at \(x_0\) by the smooth case \emph{Sublinearity}. Hence,
\[\mathcal{D}_p[u + v] \leq 0\]
in the viscosity sense in \(\mathbb{R}^n\).
\end{proof}

The proof of Proposition \ref{D_p prop vis}, and hence Theorem \ref{mainthm2}, is now completed.

\paragraph{Acknowledgements:}
I thank Peter Lindqvist for useful discussions and for naming the operator.
I thank Fredrik Arbo Høeg for checking calculations.
Also, I thank Juan Manfredi for pointing out the work \cite{MR2817413}.


\bibliographystyle{alpha}
\bibliography{C:/Users/Karl_K/Documents/PhD/references}

\end{document}